\newtheorem{Lem}{Lemma}
\newtheorem{As}[Lem]{Assumption}
\newtheorem{Th}[Lem]{Theorem}
\newtheorem{Rm}[Lem]{Remark}
\newtheorem{Def}[Lem]{Definition}
\newtheorem{Notation}[Lem]{Notation}
\newtheorem{Cor}[Lem]{Corollary}
\newcommand{\grad}{\mbox{\rm grad}}
\newcommand{\cov}{\mbox{\rm Cov}}
\newcommand{\acos}{\mbox{\rm ~acos~}}
\newcommand{\vol}{\mbox{\rm vol}}
\newcommand{\fA}{\mathfrak{A}}
\newcommand{\cH}{{\cal H}}
\newcommand{\cD}{{\mathcal{D}}}
\newcommand{\cN}{{\mathcal{N}}}
\newcommand{\EE}{\mathbb E}
\newcommand{\RR}{\mathbb R}
\newcommand{\NN}{\mathbb N}
\newcommand{\SSS}{{{\mathbb S}^m}}
\newcommand{\LLL}{{\mathbb L}^m}
\newcommand{\Prb}{\mathbb P}
\newcommand{\as}{\mbox{\rm ~a.s.}}
\newcommand{\diag}{\mbox{\rm diag}}
\newcommand{\Hess}{\mbox{\rm Hess\,}} 
\DeclareMathOperator*{\argmin}{\mbox{\rm argmin}}
\newcommand{\iid}{\operatorname{\stackrel{i.i.d.}{\sim}}}
\newcommand{\inPrb}{\operatorname{\stackrel{\Prb}{\to}}}
\newcommand{\inas}{\operatorname{\stackrel{a.s.}{\to}}}
\newcommand{\inD}{\operatorname{\stackrel{\cD}{\to}}}
\DeclareMathOperator{\sign}{sign}
\newcommand{\wU}{\widetilde{U}}
\newcommand{\wF}{\widetilde{F}}
\newcommand{\wrho}{\widetilde{\rho}}
\renewcommand{\paragraph}{%
  \@startsection{paragraph}{4}%
  {\z@}{3.25ex \@plus 1ex \@minus .2ex}{-0.5em}%
  {\normalfont\normalsize\bfseries}%
}
\begin{document}
\title{A Smeary Central Limit Theorem for Manifolds with Application to High Dimensional Spheres}
\author{Benjamin Eltzner\footnote{Felix-Bernstein-Institut f\"ur Mathematische Statistik in den Biowissenschaften, Georg-August-Universit\"at G\"ottingen} 
~and Stephan F. Huckemann$^*$} 
\maketitle

\begin{abstract}
  The (CLT) central limit theorems for generalized Fr\'echet means (data descriptors assuming values in stratified spaces, such as intrinsic means, geodesics, etc.) on manifolds from the literature are only valid if a certain empirical process of Hessians of the Fr\'echet function converges suitably, as in the proof of the prototypical BP-CLT (\cite{BP05}). This is not valid in many realistic scenarios and we provide for a new very general CLT. In particular this includes scenarios where, in a suitable chart, the sample mean fluctuates asymptotically at a scale $n^\alpha$ with exponents $\alpha < 1/2$ with a non-normal distribution. As the BP-CLT yields only fluctuations that are, rescaled with $n^{1/2}$, asymptotically normal, just as the classical CLT for random vectors, these lower rates, somewhat loosely called smeariness, had to date been observed only on the circle (\cite{HH15}). We make the concept of smeariness on manifolds precise, give an example for two-smeariness on spheres of arbitrary dimension, and show that smeariness, although ``almost never'' occurring, may have serious statistical implications on a continuum of sample scenarios nearby. In fact, this effect increases with dimension, striking in particular in high dimension low sample size scenarios.
\end{abstract}

\section{Introduction}

\paragraph{The BP-CLT} The celebrated central limit theorem (CLT) for intrinsic sample means on manifolds by \cite{BP05}, and many subsequent generalizations (e.g. \cite{BB08,H_Procrustes_10,BP13,EllingsonPatrangenaruRuymgaart2013,PatrangenaruEllingson2015, BL17}), rests on a \emph{Taylor expansion} 
\begin{align}\label{eq:BP-taylor}
  \sqrt{n}\,\grad|_{x=x_0} F_n(x) =  \sqrt{n}\,\grad|_{x=0} F_n(x) + \Hess|_{x=\widetilde{x}} F_n(x) \sqrt{n}x_0
\end{align}
(with suitable $\widetilde{x}$ between $0$ and $x_0$) and a generalized \emph{strong law} ($n\to \infty$ and $x_0\to 0$)
\begin{align}\label{eq:BP-Hessian-convergence}
  \Hess|_{x=\widetilde{x}} F_n(x) \inPrb \Hess|_{x=0} F(x)\,.
\end{align}
Here, $X_1,\ldots,X_n\iid X$ is a sample on a smooth manifold $M$, 
\begin{align*}
  F_n(x) = \frac{1}{n}\, \sum_{j=1}^n d(X_j,\phi(x)\big)^2\,,\quad F(x) = \,\EE[d(X,\phi(x))^2]
\end{align*}
are the \emph{sample} and \emph{population  Fr\'echet functions} with a smooth distance $d$ on $M$ and $\phi$ denotes a local smooth chart. By definition, as a minimizer of the sample Fr\'echet function, for the preimage $x_0=x_n$ under $\phi$ of any \emph{sample Fr\'echet mean}, the l.h.s. of  Equation \eqref{eq:BP-taylor} vanishes. If $X$ features a density near the relevant cut loci, Equation \eqref{eq:BP-taylor} is a.s. valid for deterministic points $x_0$ near the preimage $0$ of the \emph{population Fr\'echet mean}, if existent (i.e. if the population Fr\'echet function has a unique minimizer). Further, if the empirical process on the l.h.s. of Equation \eqref{eq:BP-Hessian-convergence}, deterministically indexed in $\widetilde{x}$, is well defined, and not only a.s. well defined, the convergence in Equation \eqref{eq:BP-Hessian-convergence} is valid also for $x_0=x_n$, and since the properly rescaled sum of i.i.d. random variables  $\sqrt{n}\,\grad|_{x=0} F_n(x)$ converges to a Gaussian, this strain of argument gives the BP-CLT
\begin{align}\label{eq:BP-CLT} 
  \sqrt{n}  x_n \inD \cN(0,\Sigma)\,,
\end{align}
with suitable covariance matrix $\Sigma$, if the Hessian on the r.h.s of Equation \eqref{eq:BP-Hessian-convergence} is invertible. 
  
\paragraph{Beyond the BP-CLT} Recently in \citet[Example 1]{HH15}, an example on the circle with log coordinates $x\in [-\pi,\pi)$ has been provided, with population Fr\'echet mean at $x=0$ and a local density $f$ near the antipodal $-\pi$. For $x>0$ sufficiently small, the rescaled sample Fr\'echet function takes the value
\begin{align*}
  n F_n(x) &= \sum_{x -\pi \leq X_j} (X_j -x)^2 + \sum_{X_j <x-\pi} (X_j+2\pi - x)^2\\
  &= \sum_{j=1}^n(X_j - x)^2 +  4\pi \sum_{X_j < x-\pi} (X_j - x + \pi)
\end{align*}
so that 
the l.h.s. of  Equation \eqref{eq:BP-Hessian-convergence} is only a.s. well defined with value $\Hess|_x F_n(x) = 2$ a.s. (as in the Euclidean case). The r.h.s., however, assume the value $\Hess|_{x=0} F(x) = 2- 4\pi f(-\pi)$. Hence, in case of $f(-\pi)\neq 0$, the convergence \eqref{eq:BP-Hessian-convergence} is no longer valid, making the above strain of argument no longer viable. Still, as shown in \cite{MQC12,HH15}, as long as $2\pi f(-\pi)<1$, the BP-CLT \eqref{eq:BP-CLT} remains valid.

Further, in \cite{HH15} it was shown that $1=2\pi f(-\pi)$ is possible, so that the BP-CLT \eqref{eq:BP-CLT}, which, under square integrability, holds universally for Euclidean spaces, is wrong for such 2D vectors confined to a circle, by giving examples in which the fluctuations may asymptotically scale with $n^\alpha$ with exponents $\alpha$ strictly lower than one-half.

This new phenomenon has, somewhat loosely, been called \emph{smeariness}, it can only manifest in a non-Euclidean geometry. Examples beyond the circle were not known to date.

\paragraph{A General CLT} Making the concept of smeariness on manifolds precise, using Donsker Theory (e.g. from \cite{vanderVaar2000astat}) and avoiding the sample Taylor expansion \eqref{eq:BP-taylor} as well as the non generally valid convergence condition \eqref{eq:BP-Hessian-convergence}, we provide for a general CLT on manifolds that requires no assumptions other than a unique population mean and a sufficiently well behaved distance. With the \emph{degree of smeariness} $\kappa\geq 0$ our general CLT takes the form
\begin{align}\label{eq:EH-CLT}
  \sqrt{n}  x^{\kappa+1}_n \inD \cN(0,\Sigma)\,,
\end{align}
where $ x_n^{\kappa+1}$ is defined componentwise. Then,  $ x_n$ scales with $n^\alpha$, $\alpha= {\frac{1}{2(\kappa +1)}}$, and $\kappa =0$ corresponds to the usual CLT valid on Euclidean spaces, and to the BP-CLT \eqref{eq:BP-CLT}. 

We phrase our general CLT in terms of sufficiently well behaved generalized Fr\'echet means, e.g. geodesic principal components (\cite{HZ06,HHM07}) or principal nested spheres (\cite{Jung2010,JFM2011}). While we discuss some intricacies in Remark \ref{rmk:hurdle}, their details are beyond the scope of this paper and left for future research. In general, generalized Fr\'echet means are random object descriptors (e.g. \cite{MarronAlonso2014}) that take values in a stratified space and for our general CLT we require only 
\begin{enumerate}[(i)]
  \item a law of large numbers for a unique generalized Fr\'echet mean $\mu$, 
  \item a local chart at $\mu$, sufficiently smooth,
  \item an a.s. Lipschitz condition and an a.s. differentiable distance between $\mu$ and data, and
  \item a population Fr\'echet function, sufficiently smooth at $\mu$.  
\end{enumerate}

Further, we give an example for two-smeariness on spheres of arbitrary dimension, and show that smeariness, although ``almost never'' occurring, may have serious statistical implications on a continuum of sample scenarios nearby. Remarkably, 
this effect increases with dimension, striking in particular in high dimension low sample size scenarios.    

\section{A General Central Limit Theorem}\label{CLT:scn}

In a typical scenario of non-Euclidean statistics, a two-sample test is applied to two groups of manifold-valued data or more generally to data on a manifold-stratified space. Such a test can be based on certain data descriptors such as intrinsic means (e.g. \cite{BP05,MPPPR07,PatrangenaruEllingson2015}), best approximating geodesics (e.g. \cite{H_ziez_geod_10}), best approximating subspaces within a given family of subspaces and entire flags thereof (cf. \cite{HuckemannEltzner2017}), and asymptotic confidence regions can be constructed from a suitable CLT for such descriptors. In this section we first introduce the setting of generalized Fr\'echet means along with standard assumptions, we then recollect and expand some Donsker Theory from \cite{vanderVaar2000astat} and state and prove our general CLT.

\subsection{Generalized Fr\'echet Means and Assumptions}

Fr\'echet functions and Fr\'echet means have been first introduced by \cite{F48} for squared metrics $\widetilde{\rho}:Q\times Q\to [0,\infty)$ on a topological space $Q$ and later extended to squared quasimetrics by \cite{Z77}. Generalized Fr\'echet means as follows have been introduced by \cite{H_ziez_geod_10}. A simple setting is given when $P=Q$ is a Riemannian manifold and $\widetilde{\rho} = d^2$ is the squared geodesic intrinsic distance. Then a generalized Fr\'echet mean is a minimizer with respect to squared distance, often called a \emph{barycenter}.

\begin{Notation}\label{notation}
	Let $P$ and $Q$ be separable topological spaces, $Q$ is called the \emph{data space} and $P$ is called the \emph{descriptor space}, linked by a continuous map $\widetilde{\rho}: P\times Q \to [0,\infty)$ reflecting distance between a data descriptor $p\in P$ and a datum $q\in Q$. Further, with a silently underlying probability space $(\Omega,\fA,\Prb)$, let $X_1,\ldots,X_n\iid X$ be random elements on $Q$, i.e. they are Borel-measurable mappings $\Omega \to Q$. They give rise to
	\emph{generalized population} and \emph{generalized sample Fr\'echet functions},
  \begin{align*}
    \widetilde{F} &: p \mapsto \EE [\widetilde{\rho}(p, X)]\,, & \widetilde{F}_n &: p \mapsto \frac{1}{n} \sum_{j=1}^n \limits \widetilde{\rho}(p, X_j)]\,,
  \end{align*}
  respectively, and their \emph{generalized population} and \emph{generalized sample Fr\'echet means}
  \begin{align*}
    \widetilde{E} &= \left\{ p \in P : \widetilde{F}(p) = \inf_{p \in P} \limits \widetilde{F}(p) \right\}\,, & \widetilde{E}_n &= \left\{ p \in P : \widetilde{F}_n(p) = \inf_{p \in P} \limits \widetilde{F}_n(p) \right\} \,,
  \end{align*}
  respectively. Here the former set is empty if the expected value is never finite. 
\end{Notation}

With Assumption 2.3 further down, $P$ is a manifold locally near $\mu$, so that convergence in probability in the following assumption is well defined.

\begin{As}[Unique Mean with Law of Large Numbers] \label{as:unique}
  In fact, we assume that $\widetilde{E}$ is not empty but contains a single descriptor $\mu \in P$ and that for every measurable selection $\mu_n \in \widetilde{E}$,
  \begin{align*}
    \mu_n \inPrb \mu\,.
  \end{align*}
\end{As}

\begin{As}[Local Manifold Structure] \label{as:local-manifold}
  With $2\leq r\in \NN$ assume that there is a neighborhood $\widetilde{U}$ of $\mu$ that is an $m$-dimensional Riemannian manifold, $m\in \NN$, such that with a neighborhood $U$ of the origin in $\RR^m$ the exponential map $\exp_\mu : U \to \widetilde{U}$, $\exp_\mu(0)= \mu$, is a $C^r$-diffeomorphism, and we set for $p = \exp_\mu(x), p' = \exp_\mu(x')\in \widetilde{U}$ and $q\in Q$,
  \begin{align*}
    \rho &: (x,q) \mapsto \widetilde{\rho} (\exp_\mu(x), q)\,,\\
    F &: x \mapsto \widetilde{F}(\exp_\mu(x))\,, &
    F_n &: x \mapsto \widetilde{F}_n(\exp_\mu(x))\,.
  \end{align*}
  It will be convenient to extend $F_n$ to all of $\RR^m$ via $F_n(x) = F_n(0)$ for $x\in \RR^m\setminus U$.
\end{As}

\begin{As}[Almost Surely Locally Lipschitz and Differentiable at Mean]\label{as:Lipschitz}
  Further assume that 
  \begin{enumerate}[(i)]
    \item the gradient $\dot{\rho}_0 (X) := \grad_x \rho(x,X)|_{x=0}$ exists almost surely;
    \item there is a measurable function $\dot{\rho}: Q \to \RR$ satisfying $\EE [\dot{\rho}(X)^2] < \infty$ for all $x\in U$ and that the following Lipschitz condition 
    \begin{align*}
      |\rho(x_1, X) - \rho(x_2, X)| \le \dot{\rho}(X) \| x_1 - x_2 \|\as
    \end{align*}
    holds for all $x_1,x_2\in U$.
  \end{enumerate}
\end{As}

\begin{As}[Smooth Fr\'echet Function]\label{as:Taylor}
  With $2\leq r\in \NN$ and a non-vanishing tensor $T=(T_{j_1,\ldots,j_r})_{1\leq j_1\leq\ldots\leq j_r\leq m}$, assume that the Fr\'echet function admits the power series expansion
  \begin{align}
    F(x) &= F(0) + \sum_{1\leq j_1\leq\ldots\leq j_r\leq m} x_{j_1}\ldots x_{j_r} T_{j_1,\ldots,j_r} + o(\|x\|^{r})\,.  \label{eq:power_series_full}
  \end{align}
\end{As}

The tensor in $T$ in (\ref{eq:power_series_full}) can be very complicated. As is well known, for $r=2$, every symmetric tensor is diagonalizable ($m(m+1)/2$ parameters involved), which is, however, not true in general. For simplicity of argument, however, we assume that $T$ is diagonalizable with non-zero diagonal elements so that Assumption \ref{as:Taylor} rewrites as follows. In this formulation, we can also drop our assumption that $r\in \NN$.
    
\begin{As}\label{as:Taylor-2nd}
  With $2\leq r\in \RR$, a rotation matrix $R\in SO(m)$ and $T_1,\ldots,T_m \neq 0$ assume that the Fr\'echet function admits the power series expansion 
  \begin{align}
    F(x) &= F(0) + \sum_{j=1}^m \limits T_{j} |(Rx)_{j}|^r + o(\|x\|^{r})\,.\label{eq:power_series_full-2nd}
  \end{align}
\end{As}

\begin{Rm}[Typical Scenarios] Let us briefly recall typical scenarios. In many applications, $Q$ is 
\begin{enumerate}[(a)]
  \item globally a complete smooth Riemannian manifold, e.g. a sphere (cf. \cite{MJ00} for directional data), a real or complex projective space (cf. \cite{K84,MP05} for certain shape spaces) or the space of positive definite matrices (cf. \cite{DKZ09} for diffusion tensors),
  \item a non-manifold \emph{shape space} which is a quotient of a Riemannian manifold under an isometric group action with varying dimensions of isotropy groups (e.g..\cite{DM98,KBCL99}, for spaces of three- and higher-dimensional shapes), 
  \item a general stratified space where all strata are manifolds with compatible Riemannian structures, e.g. phylogenetic tree spaces (cf. \cite{BilleraHolmesVogtmann2001,MoultonSteel2004}, for varying geometries).
\end{enumerate}

\noindent On these spaces,
\begin{itemize}
  \item[($\alpha$)] in most of the above applications, $P=Q$ and \emph{intrinsic means} are considered where $\wrho$ is the squared  geodesic distance induced from the Riemannian structure.
  \item[($\beta$)] In other examples, $P= \Gamma$, the space of geodesics on $Q$ is considered, in view of PCA-like dimension reduction methods (e.g. \cite{fletch4,HZ06,HHM07}), or
  \item[($\gamma$)] $P$ is a family of subspaces of $Q$, or even a space of \emph{nested subspheres} in \cite{Jung2010,JFM2011}; more general families have been recently considered in generic dimension reduction methods, e.g. \cite{Sommer2016,Pennec2017}.    
\end{itemize}
\end{Rm}

\begin{Rm}\label{rmk:hurdle} Of the above assumptions some are harder to prove in real examples than others.
  \begin{enumerate}[(i)]
    \item Of all above assumptions, uniqueness (first part of Assumption \ref{as:unique}) seems most challenging to verify. To date, only for intrinsic means on the circle the entire picture is known, cf. \citet[p. 182 ff.]{HH15}. For complete Riemannian manifolds, uniqueness for intrinsic means has been shown if the support is sufficiently concentrated (cf. \cite{Ka77, KWS90,L01,Gr05,Afsari10}) and intrinsic sample means are unique a.s. if from a distribution absolutely continous w.r.t. Riemannian measure, cf. \citet[Remark 2.6]{BP03} (for the circle) and \citet[Theorem 2.1]{ArnaudonMiclo2014} (in general).
    \item For the above typical scenarios, we anticipate that the other assumptions are often valid in concrete applications. 
      
    For instance, Assumption \ref{as:local-manifold} is also true on non-manifold shape spaces, due to the \emph{manifold stability theorem} \citet[Corollary 1]{H_meansmeans_12}. It may be not be valid, however, on arbitrary stratified spaces, cf. \cite{HHMMN13,H_Mattingly_Miller_Nolen2015}.
    \item Moreover, Assumption \ref{as:Lipschitz} is only slightly stronger than \emph{uniform coercivity} (condition (2) in \citet[p. 1118]{H_ziez_geod_10}) which suffices for the strong law (second part of  Assumption \ref{as:unique}), cf. \citet[Theorem A4]{H_ziez_geod_10} and \citet[Theorem 4.1]{HuckemannEltzner2017}, and this has been established for principal nested spheres  in \citet[Theorem 3.8]{HuckemannEltzner2017} and for geodesics with nested mean on Kendall's shape spaces in \citet[Theorem 3.9]{HuckemannEltzner2017}. In consequence of Lemma \ref{lem3} below, we have that Assumption \ref{as:Lipschitz} holds for intrinsic means of distributions on spheres which feature a density near the antipodal of the intrinsic population mean.
  \end{enumerate}
  A more detailed analysis is beyond the scope of this paper and left for future research. 
\end{Rm}

\subsection{General CLT}

For the following, fix a measurable selection $\mu_n\in \widetilde{E}_n$. Due to $\mu_n\inPrb \mu$ from Assumption \ref{as:unique}, we have $\Prb\{\mu_n \in \widetilde{U}\} \to 1$, and in accordance with the convention in Assumption \ref{as:local-manifold}, setting
\begin{align*}
  x_n := \left\{\begin{array}{cl}
  \exp^{-1}_{\mu}(\mu_n)&\mbox{ if }\mu_n  \in \widetilde{U} \\ 0&\mbox{ else}
  \end{array}\right.\,,
\end{align*}
note that
\begin{align}\label{eq:approx-F-fcn}
  F_n(0)\geq F_n(x_n) =  \widetilde{F}_n(\mu_n) + o_p(1)\,,
\end{align}
because $\Prb\{F_n(x_n) - \widetilde{F}_n(\mu_n) >\epsilon\} = \Prb\{\mu_n \not \in \widetilde{U}\} \to 0$ for all $\epsilon >0$.

The following is a direct consequence of \citet[Lemma 5.52]{vanderVaar2000astat}, replacing maxima with minima, where, due to continuity of $\widetilde{\rho}$, we have no need for outer measure and outer expectation, and, due to our setup, no need for approximate minimizers.

\begin{Lem}\label{lem:powers}
  Assume that for fixed constants $C$ and $\alpha > \beta$ for every $n$ and for sufficiently small $\delta$
  \begin{align}
    \sup_{\|x\| < \delta} \limits \left| F(x) - F(0) \right| &\le C \delta^\alpha \label{eq:Taylor_order}\,,\\
    \EE\left[ n^{1/2} \sup_{\|x\| < \delta} \limits \big| F_n(x) - F(x) - F_n(0) + F(0) \big| \right] &\le C \delta^\beta\,. \label{eq:outer_exp}
  \end{align}
  Then, any a random sequence $\RR^m\ni y_n \inPrb 0$ that satisfies $F_n(y_n) \le F_n(0)$ also satisfies $n^{1/(2 \alpha - 2 \beta)} y_n = \mathcal{O}_P(1)$.
\end{Lem}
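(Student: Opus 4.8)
The plan is to reduce the statement to the classical rate-of-convergence ``peeling'' (``slicing'') argument underlying \citet[Lemma 5.52]{vanderVaar2000astat}, with maximization replaced by minimization. In our setting two of its usual complications are absent: $\widetilde\rho$, hence $F_n$, is continuous, so no outer expectations are needed, and $F_n(y_n)\le F_n(0)$ holds without a slack term, so no approximate-minimizer error enters. I would therefore either invoke that lemma directly after matching notation, or, to keep the argument self-contained, run the peeling as follows.

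Write the centred empirical remainder $G_n(x):=F_n(x)-F(x)-F_n(0)+F(0)$, set $r_n:=n^{1/(2\alpha-2\beta)}$, and record the identity $r_n^{\alpha-\beta}=n^{1/2}$, which is what makes the exponents balance. First I would rearrange the defining inequality: $F_n(y_n)\le F_n(0)$ is equivalent to
\begin{align*}
  F(y_n)-F(0)\le -G_n(y_n)\le \sup_{\|x\|\le\|y_n\|}\big|G_n(x)\big|\,,
\end{align*}
and, since $0$ is the local minimizer of $F$, the growth bound \eqref{eq:Taylor_order} controls the left-hand side from below by a constant multiple of $\|y_n\|^{\alpha}$. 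Then I would slice: fix $\delta$ small enough for \eqref{eq:Taylor_order} and \eqref{eq:outer_exp} to apply, and for $M\in\NN$ dominate the event $\{r_n\|y_n\|>2^M\}$ by $\{\|y_n\|\ge\delta\}$ together with the union of the shells $S_{j,n}:=\{2^{j-1}<r_n\|y_n\|\le 2^{j}\}$ over indices $M\le j$ with $2^{j}\le\delta r_n$. On $S_{j,n}$ the displayed inequality forces $\sup_{\|x\|\le 2^{j}/r_n}|G_n(x)|$ above a constant times $(2^{j}/r_n)^{\alpha}$, so Markov's inequality and \eqref{eq:outer_exp} give $\Prb(S_{j,n})\lesssim 2^{-j(\alpha-\beta)}$ once $r_n^{\alpha-\beta}=n^{1/2}$ is used. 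Summing this geometric series over $j\ge M$ (which converges because $\alpha>\beta$) and using $\Prb(\|y_n\|\ge\delta)\to0$, which follows from $y_n\inPrb 0$, yields $\limsup_n\Prb(r_n\|y_n\|>2^M)\lesssim 2^{-M(\alpha-\beta)}$, which tends to $0$ as $M\to\infty$; that is exactly $r_n\|y_n\|=\mathcal O_P(1)$, i.e.\ $n^{1/(2\alpha-2\beta)}y_n=\mathcal O_P(1)$.

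I expect the only genuinely delicate point to be the per-shell estimate --- seeing that a point $y_n$ at scale of order $2^{j}/r_n$ that still beats the origin is incompatible with the oscillation budget \eqref{eq:outer_exp} --- and that this turns on reading \eqref{eq:Taylor_order} as pinning down the \emph{order} of the minimum of $F$ at $0$, not merely that $F$ varies little there. Everything downstream (Markov's inequality, the geometric sum, and discarding $\{\|y_n\|\ge\delta\}$ via consistency) is routine, and the exponent $1/(2\alpha-2\beta)$ is forced by $r_n^{\alpha-\beta}=n^{1/2}$ rather than being a free choice.
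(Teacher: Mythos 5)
Your reconstruction matches the paper's route: the paper proves this lemma simply by citing \citet[Lemma 5.52]{vanderVaar2000astat} with maxima replaced by minima, and your self-contained peeling argument is exactly the proof of that lemma. The shell decomposition, the Markov step on each shell, the exponent bookkeeping $r_n^{\alpha-\beta}=n^{1/2}$, and the disposal of the outer event $\{\|y_n\|\ge\delta\}$ by consistency are all in order.

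The one point you are right to flag -- and should be stated as a correction rather than just a ``reading'' -- is this: as literally written, \eqref{eq:Taylor_order} is an \emph{upper} bound on $|F(x)-F(0)|$, whereas the per-shell estimate you (and van der Vaart) run requires a \emph{lower} bound of the form $F(x)-F(0)\ge c\|x\|^\alpha$ near the origin, i.e.\ a well-separation or growth condition. The upper bound alone is vacuously satisfied by a constant $F$ and then yields no rate, so the hypothesis as printed is too weak; van der Vaart's corresponding hypothesis in Lemma 5.52 is the one-sided growth condition (phrased there as $M(\theta)-M(\theta_0)\lesssim -d(\theta,\theta_0)^\alpha$), which under minimization becomes exactly the lower bound your argument uses. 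In the paper's downstream application in Corollary~\ref{cor:asymptotic_rate} this lower bound does hold, because near a minimum the nonzero $T_j$ in Assumption~\ref{as:Taylor-2nd} are necessarily positive, so nothing breaks there; but the lemma itself should carry the growth condition, and your proof correctly supplies and uses it in place of the printed order-of-variation bound.
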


As a first step, the following generalization of \citet[Corollary 5.53, only treating the case $r=2$]{vanderVaar2000astat} gives a bound for the scaling rate in the general CLT, so that also in case of $r\geq 2$, $\sqrt{n}x_n=o_p(1)$. 

\begin{Cor}\label{cor:asymptotic_rate}
  Under Assumptions \ref{as:unique}, \ref{as:local-manifold} and \ref{as:Lipschitz}, as well as Assumption \ref{as:Taylor} or \ref{as:Taylor-2nd},
  \begin{align*}
    n^{1/(2r - 2)} x_n = \mathcal{O}_P(1)\,.
  \end{align*}
\end{Cor}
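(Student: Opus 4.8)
The plan is to apply Lemma~\ref{lem:powers} with the random sequence $y_n = x_n$, so the task reduces to verifying its two hypotheses \eqref{eq:Taylor_order} and \eqref{eq:outer_exp} with appropriate exponents $\alpha$ and $\beta$ and then reading off the rate $n^{1/(2\alpha-2\beta)}$. First I would note that $x_n \inPrb 0$ (which is exactly Assumption~\ref{as:unique} transported through the chart of Assumption~\ref{as:local-manifold}) and that $F_n(x_n) \le F_n(0)$ holds by construction of $x_n$ as the image of a sample Fr\'echet minimizer, as recorded in \eqref{eq:approx-F-fcn}. So the sequence is admissible for the lemma.

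For hypothesis \eqref{eq:Taylor_order}: the power series expansion of Assumption~\ref{as:Taylor} (or equivalently \ref{as:Taylor-2nd}) gives $|F(x) - F(0)| \le C\|x\|^r$ for $\|x\|$ small, since the leading term is homogeneous of degree $r$ and the remainder is $o(\|x\|^r)$; thus $\alpha = r$ works. For hypothesis \eqref{eq:outer_exp}: here I would invoke Donsker-type / empirical-process control of the centered increments $g_x(X) := \rho(x,X) - \rho(0,X)$. Assumption~\ref{as:Lipschitz} gives the envelope bound $|g_x(X)| \le \dot\rho(X)\|x\|$ with $\EE[\dot\rho(X)^2] < \infty$, so the class $\{g_x : \|x\| < \delta\}$ has $L^2(\Prb)$-bracketing/diameter of order $\delta$; applying the maximal inequality (in the style of \citet[Lemma~19.34 / Corollary~19.35]{vanderVaar2000astat}, or directly via the sub-Gaussian increment bound for the empirical process) yields
\begin{align*}
  \EE\left[\sup_{\|x\| < \delta} \left| \sqrt{n}\big(F_n(x) - F(x) - F_n(0) + F(0)\big) \right|\right] \lesssim \delta\,,
\end{align*}
so $\beta = 1$ works. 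Plugging $\alpha = r$, $\beta = 1$ into the conclusion of Lemma~\ref{lem:powers} gives $n^{1/(2r-2)} x_n = \mathcal{O}_P(1)$, which is exactly the claim; and since $r \ge 2$ forces $1/(2r-2) \le 1/2$, this in particular yields $\sqrt{n}\,x_n = o_p(1)$ when $r > 2$.

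The main obstacle is the empirical-process bound \eqref{eq:outer_exp}: one must turn the pointwise a.s.\ Lipschitz estimate of Assumption~\ref{as:Lipschitz}(ii) into a uniform-in-$x$ bound on the expected supremum of the centered empirical process over a shrinking ball, at the sharp rate $\delta^1$ (not merely $\delta^{1/2}$ or with a spurious logarithmic factor). This is where the square-integrable envelope $\dot\rho$ is essential — it controls both the $L^2$-size of the class and, via a covering-number estimate for the low-dimensional index set $\{\|x\|<\delta\}\subset\RR^m$, the entropy integral — so that the relevant maximal inequality closes with linear dependence on $\delta$. A minor technical point to handle carefully is the extension convention $F_n(x) = F_n(0)$ off $U$ from Assumption~\ref{as:local-manifold}, which makes the increments identically zero outside $U$ and hence does not affect any of the suprema over small balls around the origin; and one should note that measurability issues are absent here because $\widetilde\rho$ is continuous, as already remarked before the statement of Lemma~\ref{lem:powers}.
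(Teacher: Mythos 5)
Your proof is essentially the paper's: you apply Lemma~\ref{lem:powers} with $y_n = x_n$ (admissible since $x_n \inPrb 0$ and $F_n(x_n) \le F_n(0)$ via \eqref{eq:approx-F-fcn}), read $\alpha = r$ off the power-series expansion \eqref{eq:power_series_full} or \eqref{eq:power_series_full-2nd}, and derive $\beta = 1$ from the square-integrable Lipschitz envelope via the maximal inequality for Lipschitz-indexed empirical processes --- which is exactly the content of the proof of \citet[Corollary 5.53]{vanderVaar2000astat} that the paper cites word for word. Your closing aside is, however, backwards: for $r > 2$ one has $1/(2r-2) < 1/2$, so $n^{1/(2r-2)} x_n = \mathcal{O}_P(1)$ only guarantees the \emph{slower} decay $x_n = \mathcal{O}_P(n^{-1/(2r-2)})$, and this does \emph{not} yield $\sqrt{n}\,x_n = o_p(1)$; Theorem~\ref{thm:CLT} in fact shows that $\sqrt{n}\,x_n$ diverges in distribution for $r > 2$. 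What the corollary does deliver for the subsequent theorem is remainder control, e.g. $n^{1/2}\,\|x_n\|^{r} = \mathcal{O}_P\big(n^{-1/(2r-2)}\big) = o_p(1)$.
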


\begin{proof}
  By  Assumption \ref{as:unique} and definition, $x_n \inPrb 0$ with $F_n(x_n)\leq F_n(0)$, cf. \eqref{eq:approx-F-fcn}. Hence, Lemma \ref{lem:powers} yields the assertion, because 
  for $\alpha = r$, \eqref{eq:Taylor_order} follows at once from \eqref{eq:power_series_full} or from \eqref{eq:power_series_full-2nd}, and under Assumption \ref{as:Lipschitz}, \eqref{eq:outer_exp}, for $\beta=1$ follows word by word from the proof of  \citet[Corollary 5.53]{vanderVaar2000astat}.
\end{proof}

As the second step, the following Theorem, which is a generalization and adaption of  \citet[Theorem 5.23]{vanderVaar2000astat}, shows that under Assumption \ref{as:Taylor-2nd} the above bound gives the exact scaling rate, including the explicit limiting distribution.

\begin{Th}[General CLT for Generalized Fr\'echet Means]\label{thm:CLT}
  Under Assumptions \ref{as:unique},  \ref{as:local-manifold}, \ref{as:Lipschitz} and \ref{as:Taylor-2nd}, we have
  \begin{align*}
   &n^{1/2} \left((Rx_n)_1 |(Rx_n)_1|^{r-2},\ldots, (Rx_n)_m |(Rx_n)_m|^{r-2}\right)^T\\
   \inD \quad & \cN\left(0,\frac{1}{r^2} T^{-1} \cov[\grad|_{x=0}\rho(x,X)] T^{-1}\right)\,
  \end{align*}
  with $T=\diag(T_1,\ldots,T_m)$. 
  In particular for every coordinate $j=1,\ldots,m$, 
  \begin{align*}
    n^{\frac{1}{2r-2}} (R^T x_n)_j \inD \cH_j
  \end{align*}
  where $(\sign(\cH_1)\cH_1^{r-1}, \ldots,\sign(\cH_1)\cH_m^{r-1}) $ has the above multivariate Gaussian limiting distribution.
\end{Th}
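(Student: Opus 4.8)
The plan is to mimic the classical asymptotic-normality argument for $M$-estimators, \citet[Theorem 5.23]{vanderVaar2000astat}, with two changes: the quadratic Taylor expansion used there is replaced by the $r$-th order expansion \eqref{eq:power_series_full-2nd}, and ``completing the square'' is replaced by completing the power $|\cdot|^{r/(r-1)}$. It suffices to treat the case that $R$ is the identity, so that $F(x) = F(0) + \sum_{j=1}^m T_j|x_j|^r + o(\|x\|^r)$; the general case follows by applying this case in the chart $x\mapsto Rx$ and tracking that the gradient below, and hence its covariance, gets conjugated by $R$. Note first that necessarily $T_j > 0$ for every $j$, since otherwise $0$ would fail to be the unique minimiser of $F$ (Assumption \ref{as:unique}). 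Write $\dot\rho_0(X) := \grad_x\rho(x,X)|_{x=0}$, which exists almost surely by Assumption \ref{as:Lipschitz}(i), $\Delta_n := \tfrac1n\sum_{i=1}^n\dot\rho_0(X_i)$, and $\psi_{n,j}(t) := t\,\Delta_{n,j} + T_j|t|^r$. Since Assumption \ref{as:Lipschitz}(ii) permits differentiation under the integral sign, $\EE[\dot\rho_0(X)] = \grad F(0) = 0$, and $\cov[\dot\rho_0(X)]$ is finite because $\|\dot\rho_0(X)\|\le\dot\rho(X)$ almost surely.

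The first real step is an empirical-process linearisation. By Assumption \ref{as:Lipschitz} the class $\{\rho(x,\cdot) - \rho(0,\cdot) : \|x\| < \delta\}$ is $P$-Donsker with square-integrable envelope $\delta\,\dot\rho$, and $\|\rho(x,\cdot) - \rho(0,\cdot) - x^\top\dot\rho_0\|_{L_2(P)} = o(\|x\|)$ as $x\to0$, by Assumption \ref{as:Lipschitz}(i) and dominated convergence. The maximal inequality over shrinking neighbourhoods used in the proof of \citet[Corollary 5.53]{vanderVaar2000astat}, applied with radius proportional to $n^{-1/(2r-2)}$ — legitimate since $n^{1/(2r-2)}x_n = \mathcal{O}_P(1)$ by Corollary \ref{cor:asymptotic_rate} — then yields, uniformly over $\|x\| \le C n^{-1/(2r-2)}$ for any fixed $C$,
\begin{align*}
  F_n(x) - F_n(0) = \sum_{j=1}^m\psi_{n,j}(x_j) + o_P\big(n^{-r/(2r-2)}\big)\,,
\end{align*}
where I have used $F(x) - F(0) = \sum_j T_j|x_j|^r + o(\|x\|^r)$, $\EE[\dot\rho_0(X)] = 0$, and that the linearisation remainder, of order $o_P(n^{-1/2}\cdot n^{-1/(2r-2)})$, equals $o_P(n^{-r/(2r-2)})$.

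Next I compare $x_n$ with an explicit competitor. Each $\psi_{n,j}$ is strictly convex with unique minimiser $\bar x_{n,j} := -\sign(\Delta_{n,j}/T_j)\,|\Delta_{n,j}/(rT_j)|^{1/(r-1)}$, and $\bar x_n = \mathcal{O}_P(n^{-1/(2r-2)})$, so the expansion above applies at $x = x_n$ and at $x = \bar x_n$. Since $x_n$ minimises $F_n$ locally (cf.\ \eqref{eq:approx-F-fcn}) we have $F_n(x_n)\le F_n(\bar x_n)$, and subtracting the two expansions, while using that $\bar x_{n,j}$ minimises $\psi_{n,j}$ for each $j$,
\begin{align*}
  0 \le \sum_{j=1}^m\big(\psi_{n,j}(x_{n,j}) - \psi_{n,j}(\bar x_{n,j})\big) \le o_P\big(n^{-r/(2r-2)}\big)\,,
\end{align*}
so each summand is $o_P(n^{-r/(2r-2)})$. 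The analytic heart of the proof is a quantitative convexity estimate that inverts this: writing $v = \sign(t)|t|^{r-1}$ and $\bar v_{n,j} = -\Delta_{n,j}/(rT_j) \asymp n^{-1/2}$, one checks that $\psi_{n,j}(t) - \psi_{n,j}(\bar x_{n,j})$ is comparable to $|\Delta_{n,j}|^{(2-r)/(r-1)}(v - \bar v_{n,j})^2$ when $v$ has the sign of $\bar v_{n,j}$ and is of comparable order, and is bounded below by a positive constant times $n^{-r/(2r-2)}$ otherwise; the displayed $o_P$-bound therefore excludes the latter possibility and forces, in the former range, $\sqrt n\big(\sign(x_{n,j})|x_{n,j}|^{r-1} - \sign(\bar x_{n,j})|\bar x_{n,j}|^{r-1}\big) = o_P(1)$, i.e.\
\begin{align*}
  \sqrt n\, x_{n,j}|x_{n,j}|^{r-2} = -\frac{1}{rT_j}\,\frac{1}{\sqrt n}\sum_{i=1}^n\dot\rho_{0,j}(X_i) + o_P(1)\,.
\end{align*}
By the multivariate central limit theorem $\tfrac1{\sqrt n}\sum_{i=1}^n\dot\rho_0(X_i) \inD \cN(0,\cov[\dot\rho_0(X)])$, so this yields the first assertion with $T = \diag(T_1,\ldots,T_m)$ (reinstating $R$ as above), and the ``in particular'' statement follows from the continuous mapping theorem applied coordinatewise to $v\mapsto\sign(v)|v|^{1/(r-1)}$, since $n^{1/(2r-2)}x_{n,j} = \sign(u_{n,j})|u_{n,j}|^{1/(r-1)}$ for $u_{n,j} := \sqrt n\, x_{n,j}|x_{n,j}|^{r-2}$.

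I expect the main obstacle to be this convexity inversion: one must control $\psi_{n,j}$ \emph{uniformly} across the random, shrinking scale $|\Delta_{n,j}|\asymp n^{-1/2}$, and in particular cope with the degeneracy of $t\mapsto|t|^r$ at the origin when $r>2$, where the criterion is flat and the usual second-order Taylor argument of \citet[Theorem 5.23]{vanderVaar2000astat} breaks down. A secondary delicate point is the localisation in the first step — extracting the Donsker property from the Lipschitz condition and pinning the remainder down to the precise exponent $n^{-r/(2r-2)}$ — though this stays within standard Donsker-theory bookkeeping.
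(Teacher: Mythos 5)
Your proof is correct and follows essentially the same route as the paper's: an empirical-process linearisation of $F_n - F_n(0)$ on the scale $n^{-1/(2r-2)}$ (licensed by Corollary \ref{cor:asymptotic_rate} and the Donsker/maximal-inequality machinery of \citet[Lemma 19.31, Corollary 5.53]{vanderVaar2000astat}), followed by comparison of $x_n$ with the explicit minimiser of the limiting criterion $w \mapsto \sum_j T_j|w_j|^r + w^\top RG_n$ and the classical CLT for $\dot\rho_0(X)$. The only distinction is that you work out explicitly, via the $|\Delta_{n,j}|^{(2-r)/(r-1)}(v-\bar v_{n,j})^2$ curvature estimate in the $v=\sign(t)|t|^{r-1}$ coordinate, the step from ``approximate minimiser of the limiting criterion'' to the precise $o_P(1)$ statement $\sqrt{n}\,(w_{n,j}|w_{n,j}|^{r-2} - \bar w_{n,j}|\bar w_{n,j}|^{r-2}) = o_P(1)$, which the paper states as an immediate consequence of the limiting criterion having a unique minimiser.
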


\begin{proof}
  For $z\in U$ and $2(r-1) = 1/s$, let us abbreviate
  \begin{align*}
    \tau_n(z,X) &:= n^s(\rho(z n^{-s}, X) - \rho(0, X)) - z^T \dot{\rho}_0 (X)\\
    G_n &:= n^{1/2}\left(\frac{1}{n} \sum_{j=1}^n \dot{\rho}_0 (X_j)  - \EE \left[\dot{\rho}_0 (X)\right] \right)\,,
  \end{align*}
  where we set $\rho(z n^{-s}, X)=\rho(0, X)$ if $z n^{-s}\not \in U$. Then, due to Assumptions \ref{as:Lipschitz} and \ref{as:Taylor-2nd}, and $1/2 + s - sr =0$,
  \begin{align*}
    &n^{1/2} \left( \frac{1}{n} \sum_{j=1}^n \limits \left( \tau_n(z,X_j) \right) - \EE \left[ \tau_n(z,X) \right] \right)\\
    =& n^{1/2+s}\Big(F_n(zn^{-s}) - F_n(0) - F(zn^{-s}) + F(0)\Big) - z^T G_n\\
    =& n^{1/2+s} \Big(F_n(zn^{-s}) - F_n(0)\Big) - \sum_{j=1}^m \limits T_{j} |(Rz)_{j}|^r - z^T G_n + o(\|z\|^r) 
  \end{align*}
  is a sequence of stochastic processes, indexed in $z\in U$, with zero expectation and variance converging to zero. By argument from the proof of  \citet[Lemma 19.31]{vanderVaar2000astat}, due to Assumption \ref{as:Lipschitz}, $z$ can be replaced with any random sequence $z_n = O_p(1)$, cf. also the proof of \citet[Lemma 5.23]{vanderVaar2000astat} for $r=2$, yielding, 
  \begin{align}\label{eq:master-equation}
    n^{1/2+s}\Big(F_n(z_n n^{-s}) - F_n(0)\Big) = \sum_{j=1}^m \limits T_{j} |(Rz_n)_{j}|^r + z_n^T G_n + o_P(1) \, .
  \end{align}
  By Corollary \ref{cor:asymptotic_rate}, $z_n = x_n n^s$ is a valid choice in equation \eqref{eq:master-equation}. Comparison with any other $z_n = O_P(1)$, because $\mu_n$ is a minimizer for $\widetilde{F}_n$ and $F_n(x_n)$ deviates only up to $o_p(1)$ from $\widetilde{F}_n(\mu_n)$, due to \eqref{eq:approx-F-fcn}, reveals,
  \begin{align*}
    n^{1/2+s}\Big(F_n(x_n) - F_n(0)\Big)  \le n^{1/2+s}\Big(F_n(z_n n^{-s}) - F_n(0)\Big) + o_P(1)\,.
  \end{align*}
  This asserts that $Rx_nn^s$ is a minimizer, up to $o_P(1)$, of the right hand side of \eqref{eq:master-equation}, i.e. of
    \begin{align*}
	f:w &\mapsto f(w):=\sum_{j=1}^m \limits T_{j} |(w)_j|^{r} + w^TRG_n \,.
  \end{align*}
  This function, however, has a unique minimizer, given on the component level ($j=1,\ldots,m$) by 
  \begin{align*}
    r T_j \sign((w_n)_j) |(w_n)_j|^{r-1} = - (R G_n)_j \quad \mbox{ i.e. } \quad (w_n)_j |(w_n)_j|^{r-2} = - \frac{(R G_n)_j}{r T_j} \,,
  \end{align*}
  yielding
  \begin{align*}
  \sqrt{n} (Rx_n)_j |(Rx_n)_j|^{r-2} & = - \frac{(RG_n)_j}{rT_j} +o_P(1)\,.
  \end{align*}
  Now the classical CLT gives the first assertion. The second also follows from the above display, since for $z= (Rx_n)_j$ and $H= -(RG_n)_j/rT_j$, the equation $\sqrt{n} \sign(z) |z|^{r-1} = H$ implies $\sign(z) = \sign(H)$ and hence
  \begin{align*}
    n^{\frac{1}{2r-2}} z = n^{\frac{1}{2r-2}} \sign(z) |z| = \sign(H) |H|^{\frac{1}{r-1}}\,.
  \end{align*}

\end{proof}

\begin{Rm} 
	The above arguments rely among others on the fact that due to Assumption  \ref{as:Lipschitz}, a specific convergence, different from \eqref{eq:BP-Hessian-convergence}, that can be easily verified for empirical processes indexed in a deterministic bounded variable, are also valid if the index varies randomly, bounded in probability. This can be weakened to the  requirement, that the function class $\rho(x, \cdot)$ possesses the \emph{Donsker} property, cf. \citet[Chapter 19]{vanderVaar2000astat}.
\end{Rm}

\section{Smeariness}\label{scn:smeary}

Recall from \cite{H_Semi_Intrinsics_15} that a sequence of random vectors $X_n$ is \emph{$k$-th order smeary} if
$n^{\frac{1}{2(k+1)}} X_n$ has a non-trivial limiting distribution as $n\to \infty$. 

With this notion, the classical central limit theorem in particular asserts for random vectors with existing second moments that the fluctuation of sample means around the population mean is $0$-th order smeary, also called \emph{nonsmeary}.

It has been shown in \cite{HH15} that the fluctuation of random directions on the circle of sample means around the population mean may feature smeariness of any given positive integer order. It has been unknown to date, however, whether the  phenomenon of smeariness extends to higher dimensions, in particular, to positive curvature. 

To this end, we now make the concept of smeariness on manifolds precise.

\begin{Def} Let $(\Omega,\fA,\Prb)$ be a probability space, $X:\Omega\to \RR^m$ a non-deterministic random vector and $k>-1$. Then a sequence of Borel measurable mappings $X_n:\Omega_n \to \RR^m$ ($n\in \NN$) with $\Omega_n \in \fA$, $\Prb(\Omega_n)\to 1$ ($n\to \infty$) is \emph{$k$-smeary with limiting distribution} of $X$ if 
\begin{align*}
  \Prb\left\{n^{\frac{1}{2(k+1)}}X_n \in B|\Omega_n\right\} &\to \Prb\{X\in B\}\mbox{ as }n\to \infty\mbox{ for all Borel sets }B\subset \RR^m\,.
\end{align*}
In this case we write $n^{\frac{1}{2(k+1)}}X_n \inD X$.
\end{Def}

Note that $-1<k$-smeariness implies that $\Prb\{X_n \in B|\Omega_n\} \to 1_{0\in B}$ for all Borel $B\subset \RR^m$. As usual, we abbreviate this with $X_n \inPrb 0$.

\begin{Lem} \label{lem:chart_independence}
  Let $X_n: \Omega_n \to \RR^m$ be Borel measurable with $\Prb(\Omega_n)\to 1$ and $X_n\inPrb 0$, consider a continuously differentiable local bijection $\Phi:U\to V$  preserving the origin $0\in U,V$ open $\subset \RR^m$, set $Y_n = \Phi(X_n): \Omega_n \cap \{X_n \in U\} \to \RR^m$ and let $k>-1$. Then 
  \begin{center}
    $X_n$ is $k$-smeary $\Leftrightarrow$ $Y_n$ is $k$-smeary.
  \end{center}
  In particular, if $X$ has the limiting distribution of $n^{\frac{1}{2(k+1)}}X_n$, then $D\Phi(0) X$ has the limiting distribution of $n^{\frac{1}{2(k+1)}}Y_n$. Here $D\Phi(x)$ denotes the differential of $\Phi$ at $x\in U$ and $\det\big(D\Phi(0)\big) \neq 0$ due to invertibility of $\Phi$.
\end{Lem}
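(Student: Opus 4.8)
The plan is to reduce the statement to a change-of-variables argument for weak convergence. First I would unpack the definition of $k$-smeariness for $X_n$: it says $n^{\frac{1}{2(k+1)}}X_n \inD X$ in the sense of the preceding definition, i.e.\ $\Prb\{n^{\frac{1}{2(k+1)}}X_n\in B\mid \Omega_n\}\to\Prb\{X\in B\}$ for all Borel $B$. Since $X_n\inPrb 0$ and $\Phi$ is continuous at $0$ with $\Phi(0)=0$, we have $Y_n = \Phi(X_n)\inPrb 0$ as well, and $\Prb\big(\Omega_n\cap\{X_n\in U\}\big)\to 1$, so the domain restriction is asymptotically negligible and the conditional probabilities are well-behaved. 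The crux is then to show $n^{\frac{1}{2(k+1)}}Y_n \inD D\Phi(0)X$, which by symmetry of the roles of $\Phi$ and $\Phi^{-1}$ immediately gives the claimed equivalence (note $\Phi^{-1}$ is continuously differentiable near $0$ with derivative $D\Phi(0)^{-1}$ by the inverse function theorem, using $\det D\Phi(0)\neq 0$).

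The key step is a first-order Taylor expansion of $\Phi$ at the origin: write $\Phi(x) = D\Phi(0)x + R(x)$ where $R(x) = o(\|x\|)$ as $x\to 0$. Multiplying by the rate, $n^{\frac{1}{2(k+1)}}Y_n = D\Phi(0)\,\big(n^{\frac{1}{2(k+1)}}X_n\big) + n^{\frac{1}{2(k+1)}}R(X_n)$. The first term converges in distribution to $D\Phi(0)X$ by the continuous mapping theorem applied to the linear (hence continuous) map $D\Phi(0)$. For the remainder term, I would show $n^{\frac{1}{2(k+1)}}R(X_n) \inPrb 0$: fix $\epsilon>0$ and $\eta>0$; choose $\delta$ so that $\|R(x)\|\le \eta\|x\|$ for $\|x\|<\delta$; then on the event $\{\|X_n\|<\delta\}$ (which has probability $\to 1$) we have $n^{\frac{1}{2(k+1)}}\|R(X_n)\| \le \eta\, n^{\frac{1}{2(k+1)}}\|X_n\|$, and since $n^{\frac{1}{2(k+1)}}X_n = O_P(1)$ (it converges in distribution), choosing $\eta$ small makes this $<\epsilon$ with probability arbitrarily close to $1$. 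Then Slutsky's lemma combines the two pieces to give $n^{\frac{1}{2(k+1)}}Y_n \inD D\Phi(0)X$.

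I would also need to verify that $D\Phi(0)X$ is non-trivial, i.e.\ a genuine (non-deterministic) limiting distribution, which holds because $X$ is non-deterministic by hypothesis and $D\Phi(0)$ is invertible, so $D\Phi(0)X$ is non-deterministic too; this confirms that $Y_n$ is genuinely $k$-smeary in the sense of the definition rather than merely $\inPrb 0$. A minor bookkeeping point is that all the convergence statements are conditional on the shrinking domains $\Omega_n$ and $\Omega_n\cap\{X_n\in U\}$, but since $\Prb(\Omega_n)\to 1$ this conditioning does not affect weak limits (one can pass between conditional and unconditional statements freely in the limit), so the standard unconditional tools (continuous mapping, Slutsky) apply verbatim.

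The main obstacle I anticipate is purely one of careful measure-theoretic bookkeeping: ensuring that the maps $Y_n$ are Borel measurable on the correct (random) domains, that the events $\{X_n\in U\}$ and $\{\|X_n\|<\delta\}$ interact correctly with the conditioning on $\Omega_n$, and that ``$n^{\frac{1}{2(k+1)}}X_n=O_P(1)$'' is legitimately deduced from convergence in distribution in this conditional-domain setting. The analytic content — Taylor expansion plus Slutsky — is routine; the care lies in handling the domains $\Omega_n$ so that the quoted classical limit theorems genuinely apply.
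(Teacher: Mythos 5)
Your proof is correct and takes essentially the same route as the paper's: a delta-method argument via a first-order Taylor expansion of $\Phi$ at the origin, followed by the continuous mapping theorem and Slutsky to identify the limit as $D\Phi(0)X$, with the reverse implication obtained by applying the same argument to $\Phi^{-1}$. The only difference is cosmetic: you use the Peano form $\Phi(x)=D\Phi(0)x+o(\|x\|)$ with an explicit $O_P$ estimate on the remainder, whereas the paper writes $\Phi(X_n)=D\Phi(\widetilde X_n)X_n$ with an intermediate point $\widetilde X_n$ between $0$ and $X_n$ — a form which, for $m>1$, does not literally hold with a single intermediate point and needs the same little-$o$ patch you supply, so your version is if anything a bit more careful on that point.
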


\begin{proof}
The implication ``$\Rightarrow$'' is a direct consequence of a Taylor expansion and the continuity theorem with a suitable point  $\widetilde{X}_n\inPrb 0$ between the origin and $X_n$ as follows
\begin{align*}
  \Prb\left\{n^{\frac{1}{2(k+1)}}Y_n \in B|\Omega_n\cap \{X_n \in U\}\right\} &=\Prb\left\{n^{\frac{1}{2(k+1)}}D\Phi(\widetilde{X}_n) X_n \in B|\Omega_n\cap \{X_n \in U\}\right\}\\
  &\to \Prb\left\{D\Phi(0) X\in B\right\}
\end{align*}
because  $\Prb\{X_n \in U\}\to 1$ due to  $X_n\inPrb 0$.

Similarly, the implication ``$\Leftarrow$'' follows. Suppose that $Y$ has the limiting distribution of $n^{\frac{1}{2(k+1)}}Y_n$. Then
\begin{align*}
\Prb\left\{n^{\frac{1}{2(k+1)}}X_n \in B|\Omega_n\right\} &= \Prb\left\{n^{\frac{1}{2(k+1)}}D\Phi(\widetilde{X}_n)^{-1} Y_n \in B|\Omega_n\cap \{X_n \in U\}\right\} \\ &\hspace*{2cm} + \Prb\left\{n^{\frac{1}{2(k+1)}}X_n \in B|\Omega_n \cap \{X_n \not\in U\}\right\}\\
&\to \Prb\left\{D\Phi(0)^{-1} Y\in B\right\}\,,
\end{align*}
again due to the hypothesis   $X_n\inPrb 0$.
\end{proof}

In consequence of Lemma \ref{lem:chart_independence}, we have the following general definition.

\begin{Def}
  A sequence $\mu_n\inPrb \mu$ of random variables on a $m$-dimensional manifold $M$ is \emph{$k$-smeary} if in one -- and hence in every -- continuously differentiable chart $\phi^{-1}:\wU \to \RR^m$ around $\mu \in \wU \subset M$ the sequence of vectors $\phi^{-1}(\mu_n)-\phi^{-1}(\mu) : \{\mu_n \in \wU\} \to \RR^m$ is $k$-smeary.  
\end{Def}

\begin{Rm}
  In particular, the order of smeariness is independent of the chart chosen.
\end{Rm}

\section{An Example of Two-Smeariness on Spheres} 
\subsection{Setup}\label{setup:scn}

Consider a random variable $X$ distributed on the $m$-dimensional unit sphere $\SSS$ ($m\geq 2$) 
that is uniformly distributed on the lower half sphere 
$ \LLL = \{q\in \SSS: q_2\leq 0\}$ with total mass $0<\alpha<1$ and assuming the \emph{north pole} $\mu=(0,1,0,\ldots,0)^T$ with probability $1-\alpha$. Then we have the \emph{Fr\'echet function}
\begin{align*}
  \wF:\SSS \to [0,\infty),~p\mapsto  \int_\SSS \wrho(p,q) \,d\Prb^X(q)
\end{align*}
involving the \emph{squared spherical distance} $\wrho(p,q) = \arccos\langle p,q\rangle^2$ based on the standard inner product $\langle\cdot,\cdot\rangle$ of $\RR^{m+1}$. 
Every minimizer $p^* \in \SSS$ of $F$ is called an \emph{intrinsic Fr\'echet population mean} of $X$. 

With the volume of $\SSS$ given by
\begin{align*}
  v_m = {\vol}(\SSS)= \frac{2\pi^{\frac{m+1}{2}}}{\Gamma\left(\frac{m+1}{2}\right)}
\end{align*}
define
\begin{align*}
  \gamma_m = \frac{v_{m+1}}{2v_m} = \frac{\sqrt{\pi}}{2} \,\frac{\Gamma\left(\frac{m+1}{2}\right)}{\Gamma\left(\frac{m+2}{2}\right)} \,.
\end{align*}
Moreover, we have the \emph{exponential chart} centered at $\mu\in \SSS$ with inverse
\begin{align*}
  \exp_\mu^{-1}(p) = (e_1,e_3,\ldots,e_{m+1})^T \big(p -\langle p,\mu\rangle\mu\big) \frac{\arccos\langle p,\mu\rangle}{\| p -\langle p,\mu\rangle\mu\|} = x\in \RR^m\, 
\end{align*}
where $e_1,\ldots,e_{m+1}$ are the standard unit column vectors in $\RR^{m+1}$. Note that $\exp_\mu^{-1}$ has continuous derivatives of any order in $\wU=\SSS\setminus\{-\mu\}$ and recall that $e_2=\mu$.

\subsection{Derivatives of the Fr\'echet Function}\label{scn:derivatives}

\begin{Lem}\label{lem:crescent}
  With the above notation, the function $F= \wF\circ \exp_\mu$ has derivatives of any order for $x\in \exp_\mu^{-1}(\wU)$ with $\|x\| < \pi/2$. For
  $\alpha = 1/(1+\gamma_m)$ the north pole $\mu$ gives the unique intrinsic Fr\'echet mean with $\Hess|_{x=0} F\circ \exp_\mu(x) = 0$. Moreover, for any choice of $0<\alpha<1$,
  \begin{align*}
    \partial_i\partial_k\partial_l|_{x=0} F&=0\\
    \partial_i\partial_k\partial_l\partial_s|_{x=0} F&=c_m \delta_{i,k,l,s}
  \end{align*}
  for every $1\leq i,k,l,s\leq m$ with the constant $c_m=\frac{2\gamma_m}{1+\gamma_m}\,\frac{m-1}{m+2} >0$.
\end{Lem}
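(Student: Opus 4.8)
The plan is to compute the Fréchet function $F = \widetilde F \circ \exp_\mu$ explicitly enough near the origin to read off the Hessian and the third and fourth derivatives. First I would split the Fréchet function according to the two-part distribution: $F(x) = (1-\alpha)\,\widetilde\rho(\exp_\mu(x),\mu) + \alpha\,\frac{1}{v_m}\int_{\LLL}\widetilde\rho(\exp_\mu(x),q)\,dq$. The first term is simply $(1-\alpha)\|x\|^2$ in the exponential chart, so it contributes $2(1-\alpha)$ to the Hessian at $0$ and vanishing third and fourth derivatives. The work is entirely in the integral over the lower half-sphere $\LLL=\{q_2\le 0\}$. For $\|x\|<\pi/2$ the point $\exp_\mu(x)$ lies in the (closed) upper hemisphere and is at spherical distance $<\pi/2+\pi/2=\pi$ from every $q\in\LLL$, so $-\mu\notin\exp_\mu(\LLL$-antipode$)$ issues do not arise and $\widetilde\rho(\exp_\mu(x),q)=\arccos\langle\exp_\mu(x),q\rangle^2$ is a smooth function of $x$ on the region in question; interchanging differentiation and integration is then justified by dominated convergence (the integrand and its derivatives are bounded on the compact parameter region), which gives the claimed smoothness of $F$.

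Next I would exploit symmetry to cut down the computation. The distribution is invariant under the subgroup $O(m-1)$ of rotations of $\SSS$ fixing $\mu$ and fixing the $q_1$-axis direction — more precisely, under all rotations of the coordinates $(q_3,\ldots,q_{m+1})$ and, crucially, also under the reflection that permutes among the chart coordinates $x_2,\ldots,x_m$ arbitrarily together with sign flips, while $x_1$ (the direction toward/away from the pole of $\LLL$) is only invariant under $x_1\mapsto -x_1$? Here one must be slightly careful: $\LLL$ is a hemisphere centered at $-e_2=-\mu$'s... no, centered at the point with $q_2=-1$... actually $\LLL$ is the hemisphere $\{q_2\le 0\}$, whose "pole" is $-\mu$, and the chart coordinate conjugate to $\mu$ is absent, so in fact $\LLL$ is symmetric under the \emph{full} orthogonal group $O(m)$ acting on $(q_1,q_3,\ldots,q_{m+1})$, hence $F(x)$ depends only on $\|x\|$. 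Thus $F(x)=g(\|x\|^2)$ for a smooth scalar function $g$, which immediately forces $\partial_i\partial_k\partial_l F(0)=0$ and $\partial_i\partial_k\partial_l\partial_s F(0)=c_m\,\delta_{i,k,l,s}$ with $c_m$ determined by a single one-dimensional computation (e.g. along $x=te_1$): $F(te_1)=g(t^2)=F(0)+g'(0)t^2+\tfrac12 g''(0)t^4+o(t^4)$ gives $\partial_1^2F(0)=2g'(0)$ and $\partial_1^4F(0)=12g''(0)=c_m$. The Hessian condition then reads $2(1-\alpha)+\alpha\cdot(\text{Hessian of the }\LLL\text{-average})=0$.

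So the remaining task is one explicit scalar integral: compute $h(t):=\frac{1}{v_m}\int_{\LLL}\arccos\langle\exp_\mu(te_1),q\rangle^2\,dq$ and expand to order $t^4$. Parametrize $q\in\SSS$ by the angle $\theta$ from $\mu$ and a point on the equatorial $\SSS[m-1]$; write $\exp_\mu(te_1)=(\sin t)e_1+(\cos t)\mu$, so $\langle\exp_\mu(te_1),q\rangle=\sin t\sin\theta\cos\psi+\cos t\cos\theta$ where $\psi$ is the angle on the equator toward $e_1$. Then $\arccos$ of this, squared, integrated against the surface measure $\sin^{m-1}\theta\,d\theta\times(\text{measure on }\SSS[m-1])$ restricted to $\LLL$. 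The even-in-$t$ structure (from the $O(m)$ symmetry) means only $t^2$ and $t^4$ coefficients survive; these reduce to one-dimensional integrals of powers of $\theta$ and trigonometric functions that evaluate in terms of the Gamma-function ratios defining $\gamma_m$ — this is where $\gamma_m$ and the $\frac{m-1}{m+2}$ factor will emerge, via Beta-function identities like $\int_0^{\pi/2}\sin^{m-1}\theta\cos^2\theta\,d\theta$ relative to $\int_0^{\pi/2}\sin^{m-1}\theta\,d\theta$. The main obstacle is precisely this bookkeeping: correctly setting up the half-sphere integration (the constraint $q_2\le 0$ restricts $\theta\in[\pi/2,\pi]$, so substituting $\theta\mapsto\pi-\theta$ puts it on $[0,\pi/2]$ and flips $\cos\theta\mapsto-\cos\theta$), keeping track of which terms are odd and vanish, and carrying the $\arccos^2$ Taylor expansion far enough — one needs $\arccos(\cos\theta+\varepsilon)^2 = \theta^2 - 2\theta\varepsilon/\sin\theta + (\text{terms in }\varepsilon^2)$ with the $\varepsilon^2$ coefficient itself contributing at order $t^2$, so second-order perturbation of $\arccos^2$ is genuinely needed. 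Once the order-$t^2$ coefficient of $h$ is found, setting the total Hessian $2(1-\alpha)+2\alpha h'(0)/1=0$ and solving pins down $\alpha=1/(1+\gamma_m)$; plugging this $\alpha$ into the order-$t^4$ coefficient yields $c_m=\tfrac{2\gamma_m}{1+\gamma_m}\cdot\tfrac{m-1}{m+2}>0$, and positivity is manifest since $\gamma_m>0$ and $m\ge 2$.
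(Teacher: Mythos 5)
Your symmetry reduction is sound and is in fact the same observation the paper makes implicitly (choosing polar coordinates so that the point $p$ can WLOG be taken along one fixed meridian at geodesic distance $\delta$ from $\mu$, so $F(x)=G(\|x\|)$ with $G$ even). Both proofs agree that this reduces the tensor claim to a single 1D Taylor coefficient. But there are two genuine gaps in your plan.

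First, your justification of smoothness and of interchanging $\partial_x^k$ with the integral over $\LLL$ is wrong as stated. You claim the integrand and its derivatives are "bounded on the compact parameter region." They are not: for every $x$ with $\|x\|<\pi/2$ the antipode $-\exp_\mu(x)$ lies in the interior of $\LLL$, and near that point $\arccos^2\langle\exp_\mu(x),q\rangle$ has only a $C^1$ (not $C^2$) dependence. Writing $u$ for the displacement of $q$ from the singular point, the $k$-th $x$-derivative of the integrand behaves like $\|u\|^{1-k}$, so against the surface measure $\sim\|u\|^{m-1}\,d\|u\|$ the fourth derivative is integrable only for $m\geq 4$. For $m=2,3$ — which the lemma explicitly covers — a direct "differentiate under the integral sign, apply DCT" argument breaks down exactly at the order you need. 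The paper avoids this entirely: it writes $\widetilde F(p)-\widetilde F(\mu)$ as a difference $C_+(\delta)-C_-(\delta)$ of two thin "crescent" integrals near the equator of $\LLL$ (obtained by rotating the domain rather than the distribution, which moves the integration domain away from the singular point), and then uses the identity
\begin{align*}
\arccos(a)^2-\arccos(-a)^2=-2\pi\arcsin(a)\,,
\end{align*}
which replaces the singular $\arccos^2$ by the analytic $\arcsin$. This identity is not a bookkeeping convenience — it is the device that produces smoothness of $F$ at all and the clean $\arcsin$ Taylor series that yields $\gamma_m$ and $\frac{m-1}{m+2}$. Without it (or an equivalent cancellation argument) your scalar integral $h(t)$ cannot be expanded to order $t^4$ by termwise differentiation, and your remark that "second-order perturbation of $\arccos^2$ is genuinely needed" understates the difficulty: the perturbation coefficients you write down blow up as $\theta\to\pi$, precisely where the measure concentrates for large $m$ and where the singularity sits.

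Second, you never address the claim that $\mu$ is the \emph{unique} global Fr\'echet mean. Showing that $\mu$ is a stationary point with vanishing Hessian and positive fourth-order term only gives a local minimum. The paper has a separate argument: it computes $G''(\delta)$ explicitly, bounds it below by $2-\alpha\bigl(2+\tfrac{v_{m+1}}{v_m}\bigr)=0$ with strict inequality for $\delta\in(0,\pi)$, and concludes that $G'$ is strictly increasing from $G'(0)=0$, hence $\mu$ is the only stationary point and the global minimizer. Some version of this monotonicity argument is required for the uniqueness assertion in the lemma.
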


\begin{proof} 
  For convenience we choose polar coordinates $\theta_1,\ldots,\theta_{m-1} \in [-\pi/2, \pi/2]$ and $\phi \in [-\pi, \pi)$ in the non-standard way
  \begin{align*}
    q=\begin{pmatrix}q_1\\q_2\\\vdots\\q_{m-1}\\q_{m}\\q_{m+1}\end{pmatrix} = \begin{pmatrix} - \left(\prod_{j=1}^{m-1} \cos \theta_j\right)  \cos  \phi \\   - \left(\prod_{j=1}^{m-1} \cos \theta_j\right)  \sin  \phi \\ \vdots \\- \cos \theta_1\,\cos\theta_2\,\sin\theta_3\\
    - \cos \theta_1\,\sin\theta_2\ \\ \sin\theta_1 \end{pmatrix} \,,
  \end{align*}
  such that the north pole $\mu$ has coordinates $(0,\ldots,0,-\pi/2)$. In fact, we have chosen these coordinates so that w.l.o.g. we may assume that the arbitrary but fixed point $p\in \SSS$ has coordinates $(0,0,\ldots,0,-\pi/2+\delta)$ with suitable $\delta \in [0,\pi]$. Setting $\Theta = [-\pi/2,\pi/2]$, with the function
  \begin{align*}
    g : \Theta^{m-1} \to [0,1],~ \theta = (\theta_1,\ldots,\theta_{m-1})\mapsto \prod_{j=1}^{m-1}\cos^{m-j}\theta_j
  \end{align*}
  we have the spherical volume element 
  $g(\theta)\,d\theta\,d\phi$. Additionally defining
  \begin{align*}
    h(\theta) = \prod_{j=1}^{m-1}\cos\theta_j\,,
  \end{align*}
  we have that
  \begin{align*}
  \wF(p) &= \wF(\mu) + \frac{2\alpha}{v_m} \big(C_+(\delta) - C_-(\delta)\big) + \delta^2(1-\alpha) =: G(\delta)
  \end{align*}
  with the two ``crescent'' integrals
  \begin{align*}
    C_+(\delta)&=  \int_{\Theta^{m-1}} \limits d \theta \, g(\theta) \, \int_{-\delta}^{0} \limits  d\phi \, \wrho(\mu,q)^2 = \int_{\Theta^{m-1}} \limits d \theta \, g(\theta)\, \int^{\delta}_{0} \limits 
    \Big(\arccos\big(h(\theta)\,\sin\phi)\big)\Big)^2\,d\phi
    \\
    C_-(\delta)&= \int_{\Theta^{m-1}} \limits d \theta \, g(\theta) \, \int_{\pi-\delta}^{\pi} \limits  d\phi \, \wrho(\mu,q)^2= \int_{\Theta^{m-1}} \limits d \theta \, g(\theta) \, \int_{0}^\delta \limits \Big(\arccos\big(-h(\theta)\,\sin\phi)\big)\Big)^2\,d\phi
  \end{align*}
  cf. Figure \ref{Fig1}, because the spherical measure of $\LLL$ is $v_m/2$. 
  
  Since for $a\in [0,1]$, 
  \begin{align*}
    \big(\arccos (a\big))^2 - \big(\arccos(-a)\big)^2 &= \big(\arccos(a) + \arccos(-a)\big) \big(\arccos(a) - \arccos(-a)\big)\\
    &=2\pi \left(\frac{\pi}{2} -\arccos(a)\right) = -2\pi \arcsin(a)\,,
  \end{align*}   
  which has arbitrary derivatives if $-1<a<1$,
  we have that
  \begin{align}\label{eq:proof-lemma1}
    \wF\circ \exp_\mu(x)=G(\delta) = G(0) - \frac{4\pi\alpha}{v_m}  \int_{\Theta^{m-1}}\limits d \theta \, g(\theta)\,\int^{\delta}_{0} \limits  \arcsin\big(h(\theta)\,\sin\phi\big)\,d\phi + \delta^2(1-\alpha)
  \end{align}
  for every $x\in \exp_\mu^{-1}(\wU)$ with $\|x\| = \delta$,
  yielding the first assertion of the Lemma.
  
  For the second assertion we use the Taylor expansion
  \begin{align}\label{Taylor-arcsin:eq}
    \arcsin\big(h(\theta)\,\sin\phi\big) = \phi \,h(\theta) + \frac{\phi^3}{6} \big(h(\theta)^3) - h(\theta)\big) + O(\phi^5)
  \end{align}
  and compute for $k=0,1,\ldots$,
  \begin{align} 
    \int_{\Theta^{m-1}}\limits g(\theta)\,h(\theta)^k\,d\theta &= \int_{\Theta^{m-1}}\prod_{j=1}^{m-1}\left(\cos^{m-j+k}\theta_j \,d\theta_j\right) \nonumber\\
    &= \int_{\Theta^{m+k-1}}\prod_{j=1}^{m+k-1}\left(\cos^{m+k-j}\theta_j \,d\theta_j\right)\Big/ \int_{\Theta^{k}}\prod_{j=1}^{k}\left(\cos^{j}\theta_j \,d\theta_j\right) \nonumber\\
    &=\frac{v_{m+k}}{v_{k+1}} \label{eq:h2k}\,,
  \end{align}
  to obtain, in conjunction with \eqref{eq:proof-lemma1},
  \begin{align*}
    G(\delta) &= G(0) + \delta^2\left(1 - \alpha \left(1 + \frac{v_{m+1}}{2v_m}\right)\right) + \frac{\delta^4}{24} \,\frac{\alpha v_{m+1}}{v_m}\,\frac{m-1}{m+2} + \ldots
  \end{align*}
  which yields that for any choice of $\alpha \in [0,1]$ we have $G'(0) =0=G'''(0)$, as well as $G''(0) \geq 0$ for $1 \geq \alpha\left(1 +\gamma_m\right)$ with equality for $\alpha=1/\left(1 +\gamma_m\right)$. Since $G''''(0) =\frac{\alpha v_{m+1}}{v_m}\,\frac{m-1}{m+2}=c_m>0$ for all $\alpha \in (0,1)$, this guarantees a local minimum for $\alpha=1/(1 + \gamma_m)$. 
  
  In order to see that $\mu$ gives the global minimum in case of $\alpha = 1/\left(1 +\frac{v_{m+1}}{2v_m}\right)$ we consider the derivatives
  \begin{align}
    G'(\delta) &= - \frac{4\pi\alpha}{v_m}  \int_{\Theta^{m-1}} \limits  g(\theta) \,  \arcsin\big(h(\theta)\,\sin\delta\big) \, d \theta + 2\delta(1-\alpha)\,,\nonumber\\
    G''(\delta) &= - \frac{4\pi\alpha}{v_m}  \int_{\Theta^{m-1}} \limits  g(\theta)\,h(\theta) \,  \frac{\cos\delta}{\sqrt{1-h(\theta)^2\,\sin^2\delta}} \, d \theta + 2(1-\alpha) \label{eq:proof-lem-1}\\ 
    &\geq - \frac{4\pi\alpha}{v_m}  \int_{\Theta^{m-1}} \limits  g(\theta)\,h(\theta)   \, d \theta + 2(1-\alpha) = 2 - \alpha \left(2 + \frac{v_{m+1}}{v_m}\right) = 0\,, \nonumber
  \end{align}
  where the inequality is strict for $\delta \neq 0,\pi$, i.e. $p \neq \pm \mu$, due to $0<h(\theta)<1$ for  all $\theta \in (-\pi/2,\pi/2)^{m-1}$. Hence we infer that $G'(\delta)$ is strictly increasing in $\delta$ from $G'(0)=0$, yielding that there is no stationary point for $F$ other than $p=\mu$.
  \begin{figure}\centering
    \includegraphics[width=0.5\textwidth]{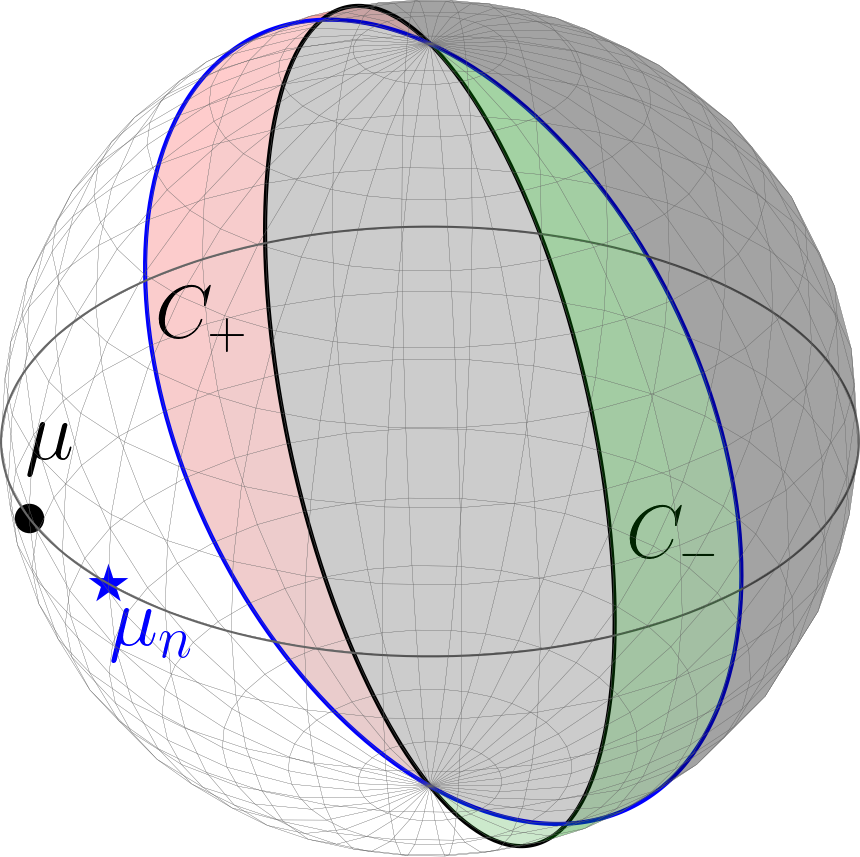}
    \caption{\it Depicting the two crescents $C_+=C_+(\delta)$ and $C_-=C_-(\delta)$ for $\delta=\arccos\langle \mu,\mu_n\rangle$ on $\SSS$ for $m=2$ with north pole $\mu$ and nearby sample Fr\'echet mean $\mu_n$ .\label{Fig1}}
  \end{figure}
\end{proof}

\begin{Rm}\label{Rmk1} ~\\\vspace*{-\baselineskip}
  \begin{enumerate}[(i)]
    \item Note that the result of  \citet[Proposition 3.1]{BL17} is not applicable to our setup as they have shown that on an arbitrary dimensional sphere the Fr\'echet function is twice differentiable, if the random direction has a density that is twice differentiable w.r.t. spherical measure. For the theorem to follow, we require fourth derivatives.
    \item Note that $O(\phi^5)$ in the Taylor expansion \eqref{Taylor-arcsin:eq} stands for 
    \begin{align*}
      \sum_{j=2}^\infty \phi^{2j+1} \sum_{r=0}^j a_{2r+1,2j+1} h(\theta)^{2r+1}
    \end{align*}
    with suitable coefficients $a_{2r+1,2j+1}\in \RR $. Moreover, due to \eqref{eq:h2k} we have
    \begin{align*}
      \frac{1}{v_m}  \int_{\Theta^{m-1}}\limits g(\theta)\,h(\theta)^{2r+1}\,d\theta &= \frac{1}{v_{2r+2}}\frac{v_{m+2r+1}}{v_m} \\
      & = \left\{\begin{array}{ll}
      \frac{1}{v_{2}}\frac{v_{m+1}}{v_m} &\mbox{ for }r=0\,,\\
      \frac{1}{v_{2}}\frac{v_{m+1}}{v_m} \prod_{k=1}^r \frac{2k}{m+2k-1}&\mbox{ for }r>0\,.
      \end{array}\right\}=O\left(\frac{1}{\sqrt{m}}\right)\,,
    \end{align*}
    due to Stirling's formula $\Gamma(z) = \sqrt{\frac{2\pi}{z}} \left(\frac{z}{e}\right)^z \left(1+O\left(\frac{1}{z}\right)\right)$.
    In consequence, cf. \eqref{eq:proof-lem-1}, $G^{(k)}(0) =0$ for odd $k\in \NN$ and $G^{(k)}(0) =O\left(\frac{1}{\sqrt{m}}\right)$ for even $4\leq k\in \NN$, as $m\to \infty$.
 \end{enumerate}
\end{Rm}

\subsection{A Two-Smeary Central Limit Theorem}\label{scn:spherical}

For a sample  $X_1,\ldots,X_n$ on $\SSS$ recall the \emph{empirical Fr\'echet function}
\begin{align*}
  \wF_n:\SSS \to [0,\infty),~p \mapsto \frac{1}{n}\sum_{j=1}^n \wrho(p,X_j)^2\,,
\end{align*}
where every minimizer $ \mu_n \in \SSS$ of $\wF_n$ is called an \emph{intrinsic Fr\'echet sample mean} or short just a \emph{sample mean}.

\begin{Th}\label{thm:smeary_sphere}
  Let $X_1,\ldots,X_n$ be a sample from $X$ as introduced in the setup Section \ref{setup:scn} with $\alpha = 1/(1+\gamma_m)$. Then, every measurable selection of sample means 
  \begin{align*}
  \mu_n&\in \argmin_{p\in \SSS} \wF_n(p) 
  \end{align*}
  is two-smeary. More precisely, with the exponential chart $\exp_\mu$ at the north pole, there is a full rank $m\times m$ matrix $\Sigma$ such that
  \begin{align*}
  \sqrt{n}\big(\exp^{-1}_\mu(\mu_n)\big)^3 \to \cN(0,\Sigma) 
  \end{align*}
  where the third power is taken component-wise. 
\end{Th}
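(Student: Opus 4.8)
The plan is to recognise the statement as an instance of the general CLT, Theorem \ref{thm:CLT}, with power $r=4$, and to read off two-smeariness from the resulting rate $n^{1/(2r-2)}=n^{1/6}=n^{1/(2(k+1))}$ with $k=2$. Concretely I would (i) check that Assumptions \ref{as:unique}, \ref{as:local-manifold}, \ref{as:Lipschitz} and \ref{as:Taylor-2nd} hold for the distribution of Section \ref{setup:scn} with $r=4$, (ii) invoke Theorem \ref{thm:CLT}, and (iii) show that the limiting covariance has full rank, in fact is a positive multiple of the identity.

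For (i): Assumption \ref{as:local-manifold} holds because, as noted in Section \ref{setup:scn}, $\exp_\mu$ is a $C^\infty$-diffeomorphism from the open ball $U$ of radius $\pi$ onto $\wU=\SSS\setminus\{-\mu\}$, so its smoothness order may be taken $\ge 4$. Assumption \ref{as:Taylor-2nd} with $r=4$, rotation $R=I_m$ and $T_1=\dots=T_m=c_m/24>0$ is exactly the content of Lemma \ref{lem:crescent}: for $\alpha=1/(1+\gamma_m)$ the gradient, Hessian and all third-order partials of $F$ at $0$ vanish (the Hessian precisely because of this choice of $\alpha$), while the fourth-order term is $\tfrac{c_m}{24}$ times the quartic form identified there, with $c_m>0$ for $m\ge2$. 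For Assumption \ref{as:Lipschitz}, note that on $\SSS$ the squared distance $q'\mapsto\wrho(q',q)=d(q',q)^2$ has gradient of norm $2\,d(q',q)\le2\pi$, hence is globally $2\pi$-Lipschitz w.r.t.\ geodesic distance, while $D\exp_\mu$ has operator norm $\le1$ everywhere on $U$ (radial derivative $1$, tangential derivatives $\sin t/t\le1$); composing along the straight segment between $x_1,x_2\in U$ gives $|\rho(x_1,X)-\rho(x_2,X)|\le 2\pi\|x_1-x_2\|$, so $\dot\rho\equiv2\pi$ works, and $\dot\rho_0(X)=\grad_x\rho(x,X)|_{x=0}$ exists whenever $X\neq-\mu$, which is almost sure since $-\mu$ lies in the interior of $\LLL$ and carries no mass. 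Finally, for Assumption \ref{as:unique}, uniqueness of $\mu$ is part of Lemma \ref{lem:crescent} and the law of large numbers $\mu_n\inPrb\mu$ follows because Assumption \ref{as:Lipschitz} implies the uniform coercivity that yields the strong law, cf.\ Remark \ref{rmk:hurdle}(iii).

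With the assumptions in place, Corollary \ref{cor:asymptotic_rate} gives $n^{1/6}x_n=\mathcal{O}_P(1)$, and Theorem \ref{thm:CLT} yields, since $z|z|^{r-2}=z^3$ for real $z$ and $R=I_m$, that $\sqrt n\,((x_n)_1^3,\dots,(x_n)_m^3)\inD\cN(0,\Sigma)$ with $\Sigma=\tfrac{1}{16}T^{-1}\cov[\dot\rho_0(X)]T^{-1}=\tfrac{36}{c_m^2}\cov[\dot\rho_0(X)]$. To see that $\Sigma$ has full rank I would use the rotational symmetry of the law of $X$ about $\mu$: the stabiliser of $\mu$ in $SO(m+1)$ acts on $T_\mu\SSS\cong\RR^m$ (via the chart of Section \ref{setup:scn}) as the full group $SO(m)$ and leaves the law of $X$, hence that of $\dot\rho_0(X)$, invariant; therefore $\cov[\dot\rho_0(X)]=\sigma^2 I_m$ for some $\sigma^2>0$, nonzero because $\dot\rho_0(X)$ is non-degenerate (it vanishes only on the atom and otherwise points towards $X$ with length $2\,d(\mu,X)$, so its support spans $\RR^m$). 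Thus $\Sigma=\tfrac{36\sigma^2}{c_m^2}I_m$ is of full rank, $x_n$ scales with $n^{-1/6}$, and $\mu_n$ is two-smeary as claimed.

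The main obstacle is the verification of Assumption \ref{as:Lipschitz} across the cut locus: the squared distance is only Lipschitz there, not differentiable, so one must argue that the base-point gradient exists almost surely (the single cut point $-\mu$ carries no mass) and that the Lipschitz constant is uniform up to $\partial U$ (which is where the bound $\|D\exp_\mu\|\le1$ is used). Everything else is either contained in Lemma \ref{lem:crescent} or routine once one observes that the tuning $\alpha=1/(1+\gamma_m)$ is precisely what annihilates the Hessian and thereby raises the operative Taylor order of $F$ from the generic value $2$ to $4$.
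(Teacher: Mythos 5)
Your proof is correct and follows the same overall structure as the paper's: verify Assumptions \ref{as:unique}, \ref{as:local-manifold}, \ref{as:Lipschitz}, \ref{as:Taylor-2nd} (with $r=4$, $R=I_m$, $T_j=c_m/24$), invoke Theorem~\ref{thm:CLT}, read off $z|z|^{r-2}=z^3$, and conclude full rank of $\Sigma=\frac{36}{c_m^2}\cov[\dot\rho_0(X)]$ from rotational symmetry about $\mu$. The one genuine departure is how you verify Assumption~\ref{as:Lipschitz}: the paper devotes Lemma~\ref{lem3} to a careful asymptotic expansion of $\grad_z\,\wrho(\exp_\mu(z),x)$ as $z$ approaches $\exp_\mu^{-1}(-x)$ or $\partial U$, showing the gradient has bounded directional limits, and then deduces the Lipschitz property from that boundedness. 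You instead argue globally: $q'\mapsto d(q',q)^2$ is $2\pi$-Lipschitz on $\SSS$ (since $d$ is $1$-Lipschitz and bounded by $\pi$), and $\exp_\mu$ is nonexpansive on the ball of radius $\pi$ because positive curvature forces $\|D\exp_\mu\|\le1$; composing gives $\dot\rho\equiv 2\pi$ directly, with a.s.\ differentiability at $x=0$ because $-\mu$ carries no mass. Your route is shorter and sidesteps the cut-locus computation entirely, which is a real simplification; the paper's Lemma~\ref{lem3}, being a boundedness statement for the gradient itself, delivers slightly more information (it is what the paper points to in Remark~\ref{rmk:hurdle}(iii) as establishing Assumption~\ref{as:Lipschitz} for general spherical densities), so the two arguments are complementary rather than one being strictly preferable. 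A second, minor divergence: you obtain the law of large numbers via the coercivity route of Remark~\ref{rmk:hurdle}(iii), whereas the paper cites the Bhattacharya--Patrangenaru strong law directly; both are standard.
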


\begin{proof}
  From Lemma \ref{lem:crescent} we infer that $\mu$ is the unique intrinsic Fr\'echet mean and hence by the strong law of \citet[Theorem 2.3]{BP03} we have that $\mu_n \to \mu$ almost surely yielding that Assumption \ref{as:unique} holds. Since $\SSS$ is an analytic Riemannian manifold also Assumption \ref{as:local-manifold} holds for arbitrary $r\in \NN$. With the exponential chart $\exp_\mu^{-1}:\wU \to \RR^m$ we have $\exp_\mu^{-1}(\mu) =0$ and we set $\exp_\mu^{-1}(\mu_n) = Z_n$ on $\{\mu_n \neq -\mu\}$ with $\Prb\{\mu_n \neq -\mu\} \to 1$ and $Z_n \inas 0$. 
  
  Further,  due to Lemma \ref{lem3}, the  family of functions
  \begin{align*}
    \rho(z,X) =  \wrho\big(\exp_\mu(z),X\big)^2,~z\in U
  \end{align*}
  has a.s. derivatives $\grad_z\rho(z,X)$, which are bounded, and on a compact set, are square integrable, so that Assumption \ref{as:Lipschitz} holds.
  
  Recalling the function $G(\delta)$ from the proof of Lemma \ref{lem:crescent} with its Taylor expansion, we have with $\delta = \wrho\big(\exp_\mu(z),\mu\big)=\|z\|$ that
  \begin{align*}
    \EE[g_z(X)] = G(\delta) = G(0) - \delta^4 \,\frac{c_m}{24} + \ldots
  \end{align*}
  and in consequence, Assumption \ref{as:Taylor-2nd} holds with  $r=4$, Thus, Theorem \ref{thm:CLT} is applicable.
  
  In particular, for the covariance we have
  \begin{align*}
    \Sigma &= \frac{36}{c^2_m} \cov\big[\grad_z|_{z=0}\,\wrho(\exp_\mu(z),X)^2\big]\,,
  \end{align*}
  which has full rank because in the exponential chart, rotational symmetry is preserved. This yields the assertion.

\end{proof}

\begin{Lem}\label{lem3}
  For $x \in \SSS$ and $z\in \RR^m\setminus\{\exp_\mu^{-1}(-x)\}$, $\|z\| < \pi$, 
  \begin{align*}\grad_z \Big(\wrho\big(\exp_\mu(z),x\big)\Big)\end{align*}
  is well defined  and has bounded directional limits as $z \to \exp_\mu^{-1}(-x)$ or $\|z\| \to \pi$.
\end{Lem}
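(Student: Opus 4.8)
The plan is to reduce everything to elementary facts about $\arccos$ and about squared–distance functions on $\SSS$. Write $d(p,q):=\arccos\langle p,q\rangle\in[0,\pi]$ for the geodesic distance, so that $\wrho(p,q)=d(p,q)^2$, and recall that the exponential map at $\mu$ is $\exp_\mu(z)=\cos\|z\|\,\mu+\tfrac{\sin\|z\|}{\|z\|}\,Ez$, with $E=(e_1,e_3,\dots,e_{m+1})$ the $(m+1)\times m$ matrix of the chart in Section~\ref{setup:scn} (with the usual removable singularity at $z=0$): it is real–analytic on all of $\RR^m$, restricts to a diffeomorphism of $\{\|z\|<\pi\}$ onto $\wU=\SSS\setminus\{-\mu\}$, sends $\{\|z\|=\pi\}$ onto $\{-\mu\}$, and has bounded differential on the closed ball $\{\|z\|\le\pi\}$. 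For the first assertion of the lemma, note that $q\mapsto d(q,x)^2=(\arccos\langle q,x\rangle)^2$ is real–analytic on $\SSS\setminus\{-x\}$ — at $q=x$ because $(\arccos u)^2=2(1-u)+\tfrac13(1-u)^2+\cdots$ is real–analytic at $u=1$, and elsewhere because $\arccos$ is real–analytic on $(-1,1)$ — so composing with $\exp_\mu$ makes $z\mapsto\wrho(\exp_\mu(z),x)$ a $C^\infty$ function on $\{\|z\|<\pi\}\setminus\exp_\mu^{-1}(\{-x\})$ (the excluded set being empty when $x=\mu$), and its $z$–gradient is well defined there. It remains to treat the two boundary behaviours: the limit $z\to z_0:=\exp_\mu^{-1}(-x)$ — an interior point precisely when $x\neq\mu$, in which case $\|z_0\|<\pi$ — and the limit $\|z\|\to\pi$.

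For the limit at $z_0$, the key device is the spherical identity $d(q,x)+d(q,-x)=\pi$, valid because $q$, $x$ and $-x$ always lie on a common great circle. Hence near $z_0$,
\[
  \wrho(\exp_\mu(z),x)=\bigl(\pi-d(\exp_\mu(z),-x)\bigr)^2=\pi^2-2\pi\sqrt{s(z)}+s(z),\qquad s(z):=d\bigl(\exp_\mu(z),-x\bigr)^2 .
\]
Now $s$ is, in the chart, the squared distance to the fixed point $-x$, hence real–analytic near $z_0$ with $s(z_0)=0$, $\grad s(z_0)=0$ and $\Hess s(z_0)=2A$, where $A\succ0$ is the round metric at $-x$ pulled back by the local diffeomorphism $\exp_\mu$; thus $s(z)=(z-z_0)^\top A(z-z_0)+O(\|z-z_0\|^3)$ and $\grad_z s(z)=2A(z-z_0)+O(\|z-z_0\|^2)$. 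Differentiating the display, for $z\neq z_0$,
\[
  \grad_z\bigl(\wrho(\exp_\mu(z),x)\bigr)=-\pi\,\frac{\grad_z s(z)}{\sqrt{s(z)}}+\grad_z s(z),
\]
and along $z=z_0+t\omega$ with $\|\omega\|=1$, letting $t\downarrow0$, the ratio $\grad_z s/\sqrt s$ tends to $2A\omega/\sqrt{\omega^\top A\omega}$ while $\grad_z s\to0$; so the whole expression converges to $-2\pi\,A\omega/\sqrt{\omega^\top A\omega}$, a direction–dependent but finite vector of norm at most $2\pi\sqrt{\lambda_{\max}(A)}$. This yields bounded directional limits at $z_0$.

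For the limit $\|z\|\to\pi$, where $\exp_\mu(z)\to-\mu$: if $x=\mu$ then $\wrho(\exp_\mu(z),\mu)=\|z\|^2$, whose $z$–gradient $2z$ clearly converges; if $x\neq\mu$ then $-\mu\neq-x$, so $d(\cdot,x)^2$ is real–analytic with bounded gradient on a neighbourhood of $-\mu$ in $\SSS$, and since $\grad_z\bigl(\wrho(\exp_\mu(z),x)\bigr)=D\exp_\mu(z)^\top\,\grad_q\bigl(\wrho(q,x)\bigr)\big|_{q=\exp_\mu(z)}$ with $D\exp_\mu$ bounded (and real–analytic) up to $\{\|z\|=\pi\}$, this gradient stays bounded and has a limit along every ray $z=t\omega$ as $t\uparrow\pi$.

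The step I expect to be the main obstacle is the limit at $z_0$: the chain rule there produces the factor $1/\sqrt{s(z)}$, which blows up, and one must see that it is precisely compensated by the first–order vanishing of $\grad_z s$ at the non-degenerate minimum $z_0$. The identity $d(q,x)+d(q,-x)=\pi$ is exactly what makes this cancellation transparent, trading the cut–locus singularity of $d(\cdot,x)^2$ at $-x$ for the benign behaviour — square root of a positive-definite quadratic — of $d(\cdot,-x)^2$ near its own zero. Everything else is ``analytic composed with analytic'' together with boundedness of $D\exp_\mu$ on the closed ball.
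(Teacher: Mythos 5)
Your proof is correct, and it takes a genuinely different route from the paper's. The paper writes the gradient directly via the chain rule as $-2\,\frac{\grad_z\langle x,\exp_\mu(z)\rangle}{\sqrt{1-\langle x,\exp_\mu(z)\rangle^2}}\,\arccos\langle x,\exp_\mu(z)\rangle$ and then establishes, by an explicit coordinate computation in a rotated frame, that $\langle x,\exp_\mu(\exp_\mu^{-1}(-x)+w)\rangle=-1+w^\top Bw+O(\|w\|^3)$ with vanishing linear term; boundedness of the ratio then follows from $\frac{2Bw+O(w^2)}{\sqrt{2w^\top Bw+O(w^3)}}$ being bounded for any symmetric (possibly degenerate or zero) $B$, so the paper never needs positive definiteness of the Hessian. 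You instead invoke the antipodal identity $d(q,x)+d(q,-x)=\pi$ to trade the cut-locus singularity of $d(\cdot,x)^2$ near $-x$ for the smooth squared-distance function $s=d(\cdot,-x)^2$ near its own zero, where the structure $s(z)=(z-z_0)^\top A(z-z_0)+O(\|z-z_0\|^3)$, $\grad s=2A(z-z_0)+O(\|z-z_0\|^2)$ is standard Riemannian geometry; the cancellation in $\grad s/\sqrt{s}$ is then transparent. This is conceptually cleaner and avoids the paper's explicit coordinate calculation, but it does rely on $A\succ 0$ (which holds here since $\exp_\mu$ is a diffeomorphism near $z_0$ and the squared distance has a nondegenerate minimum at $-x$); the paper's phrasing is agnostic to degeneracy, though in the end both proofs are proving the same nondegenerate-sphere fact. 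Your handling of the $\|z\|\to\pi$ boundary (split into $x=\mu$, giving $\wrho=\|z\|^2$, and $x\neq\mu$, using boundedness of $D\exp_\mu$ on the closed ball) is a clean way to cover what the paper dispatches rather tersely with the remark that the $z\to\exp_\mu^{-1}(-x)$ analysis also handles $x=-\mu$.
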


\begin{proof}
  Recalling that $\wrho\big(\exp_\mu(z),x\big) = \acos \! \langle x,\exp_\mu(z)\rangle^2$, we have
  \begin{align}\label{lem3-eq}
  \grad_z \Big(\wrho\big(\exp_\mu(z),x\big)\Big)&= -2\,\frac{\grad_z \langle x,\exp_\mu(z)\rangle}{\sqrt{1- \langle x,\exp_\mu(z)\rangle^2}}~ \acos \! \langle x,\exp_\mu(z)\rangle \, .
  \end{align}
  In case of $x\neq 0$ this is bounded for $\|z\| \to \pi$. As we now show boundedness of \eqref{lem3-eq} also for $z\to \exp_\mu^{-1}(-x)$ for arbitrary $x \in \SSS$, also the boundedness in case of $x=0$ and $\|z\| \to \pi$ follows at once.
  
  To this end let $z$ be near $\exp_\mu^{-1}(-x)$ such that $z = \exp_\mu^{-1}(-x)+w$ with $w=(w_1,\ldots,w_m)\in \RR^m$ small. Then the asserted boundedness of (\ref{lem3-eq}) follows from
  \begin{align}\label{lem3-eq2}
    \langle x,\exp_\mu(z) \rangle = \langle x,\exp_\mu(\exp_\mu^{-1}(-x) + w) \rangle = -1 + w^T B w + \mathcal{O}(\|w\|^3)
  \end{align}
  with a symmetric matrix $B$, because then
  \begin{align*}
  \frac{\grad_z \langle x,\exp_\mu(z)\rangle}{\sqrt{1- \langle x,\exp_\mu(z)\rangle^2}} = \frac{2Bw + \mathcal{O}(w^2)}{\sqrt{2 w^T B w  + \mathcal{O}(w^3)}} \,,
  \end{align*}
  which is bounded for $w\to 0$ with any (possibly vanishing) symmetric $B$. 
  
  Finally, in order to see that the gradient  w.r.t. $w$ of the l.h.s. of \eqref{lem3-eq2} vanishes at $w=0$ , w.l.o.g. assume that $\exp_\mu^{-1}(x) = (\theta,0,\ldots,0)^T$ for some $\theta \in [0,\pi]$, such that $x =  (\sin\theta, \cos\theta, 0, \dots, 0)^T$ and $\exp_\mu^{-1}(-x) = (\pi-\theta,0,\ldots,0)^T$. Moreover, verify that
  \begin{align*}
  \exp_\mu(z) &= \left(\frac{-\pi+\theta+w_1}{\|z\|}\,\sin \|z\|,\cos\|z\|,*,\ldots,*\right)
  \end{align*}
  which, in conjunction with $\|z\| = \sqrt{ (\pi - \theta)^2 - 2 w_1 (\pi - \theta) + \|w\|^2}$, and hence,
  $\grad_w|_{w=0} \|z\| = - e_1$, yields that
  \begin{align*}
  \grad_w &\langle x,\exp_\mu(\exp_\mu^{-1}(-x) + w)\rangle\\
  &=\left( (-\pi+\theta+w_1)  \sin \theta\left(\frac{\cos \|z\|}{\|z\|} - \frac{\sin \|z\|}{\|z\|^2}\right) - \cos\theta \sin\|z\|\right) \grad_w \|z\| \\
  &\hspace*{8cm} + \sin\theta \frac{\sin \|z\|}{\|z\|} e_1\,,
  \end{align*}
  giving
  \begin{align*}
  &\grad_w|_{w=0} \langle x,\exp_\mu(\exp_\mu^{-1}(-x) + w)\rangle \\
  &= \Big(\sin\theta \, \big( \textnormal{sinc} (\pi - \theta) - \textnormal{sinc} (\pi - \theta) - \cos(\pi - \theta) \big) - \cos\theta \sin(\pi - \theta) \Big) e_1\\
  &= -\sin\pi \, e_1 = 0\,,
  \end{align*}
  which proves the claim (\ref{lem3-eq2}).
\end{proof}

\section{High Dimension Low Sample Size Effects Near Smeariness}\label{scn:high-dim}

We illustrate the relevance of our result by simulations of the variance $V=\wF(\mu)$ (the Fr\'echet function at the point mass at the north pole $\mu$) from the above in the setup Section \ref{setup:scn} introduced distributions parametrized in $\alpha =   \alpha_{\text{crit}} + \beta\in [0,1]$, on $\mathbb{S}^m$, for dimensions $m=2$, $10$ and $100$. Here the critical value $\alpha_{\text{crit}} = \frac{1}{1+\gamma_m}$ is $0.56$, $0.72$ and $0.89$, respectively.

We consider sample sizes ranging from $30$ to $10^6$ data points. For every sample size, we draw $1000$ samples, determine the spherical mean for each sample and then determine their empirical Fr\'echet function at $\mu$, i.e. the sum of squared distances of the means from the north pole. As we have a non-unique circular minimum of the Fr\'echet function for $\beta >0$, we expect in this case that the variance $V$ approaches a finite value, namely the squared radius of the circular mean set. For $\beta\leq 0$ we have a unique minimum, where for $\beta = 0$ we expect a slow decay of $V$ with rate approaching $n^{-\frac{1}{3}}$, due to Theorem \ref{thm:smeary_sphere}, and for $\beta < 0$ we expect the decay rate to approach $n^{-1}$.

\begin{figure}[ht!]
  \centering
  \subcaptionbox{$m=2$}[0.3\textwidth]{\includegraphics[height=0.3\textwidth]{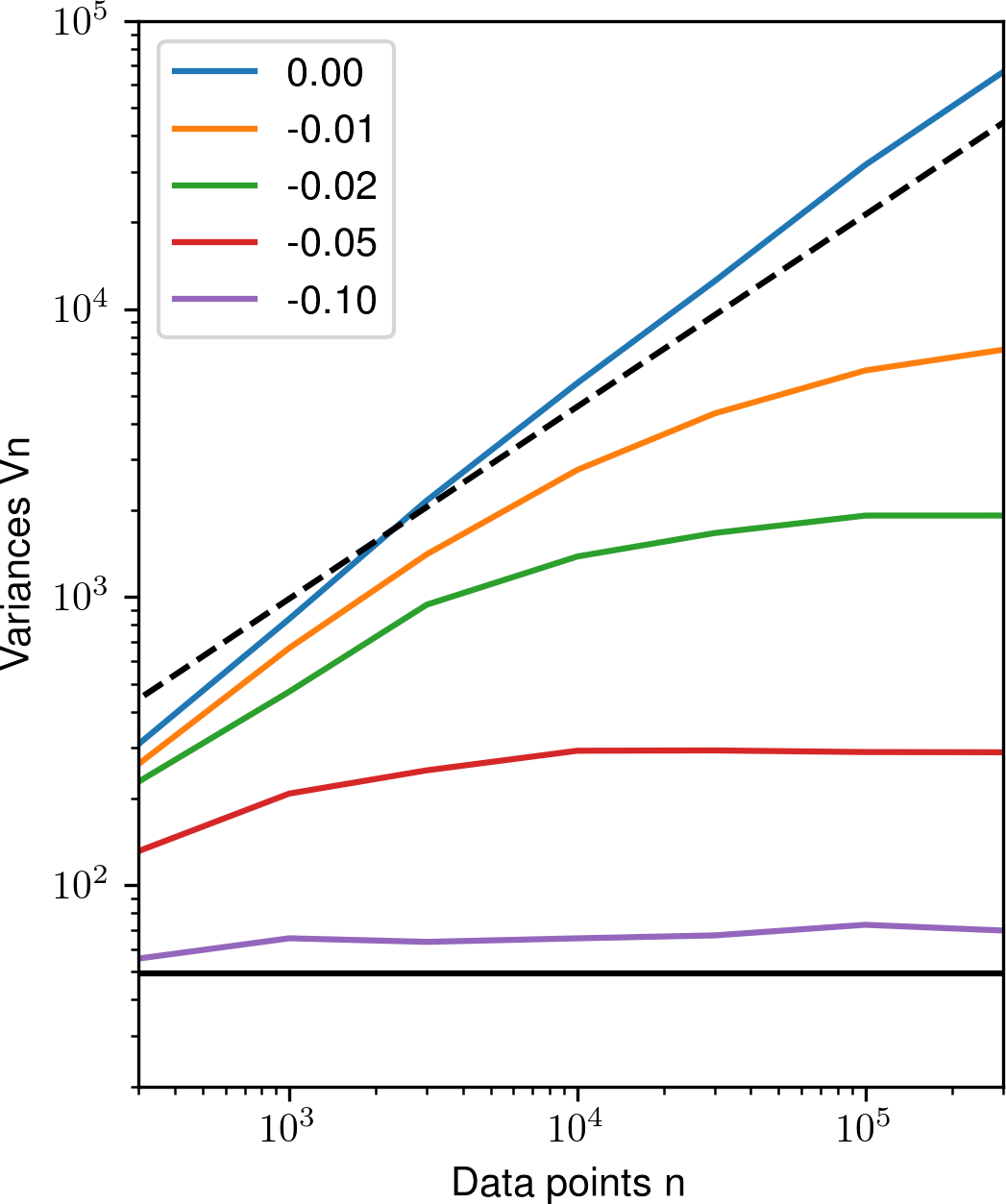}}
  \hspace*{0.02\textwidth}
  \subcaptionbox{$m=10$}[0.3\textwidth]{\includegraphics[height=0.3\textwidth]{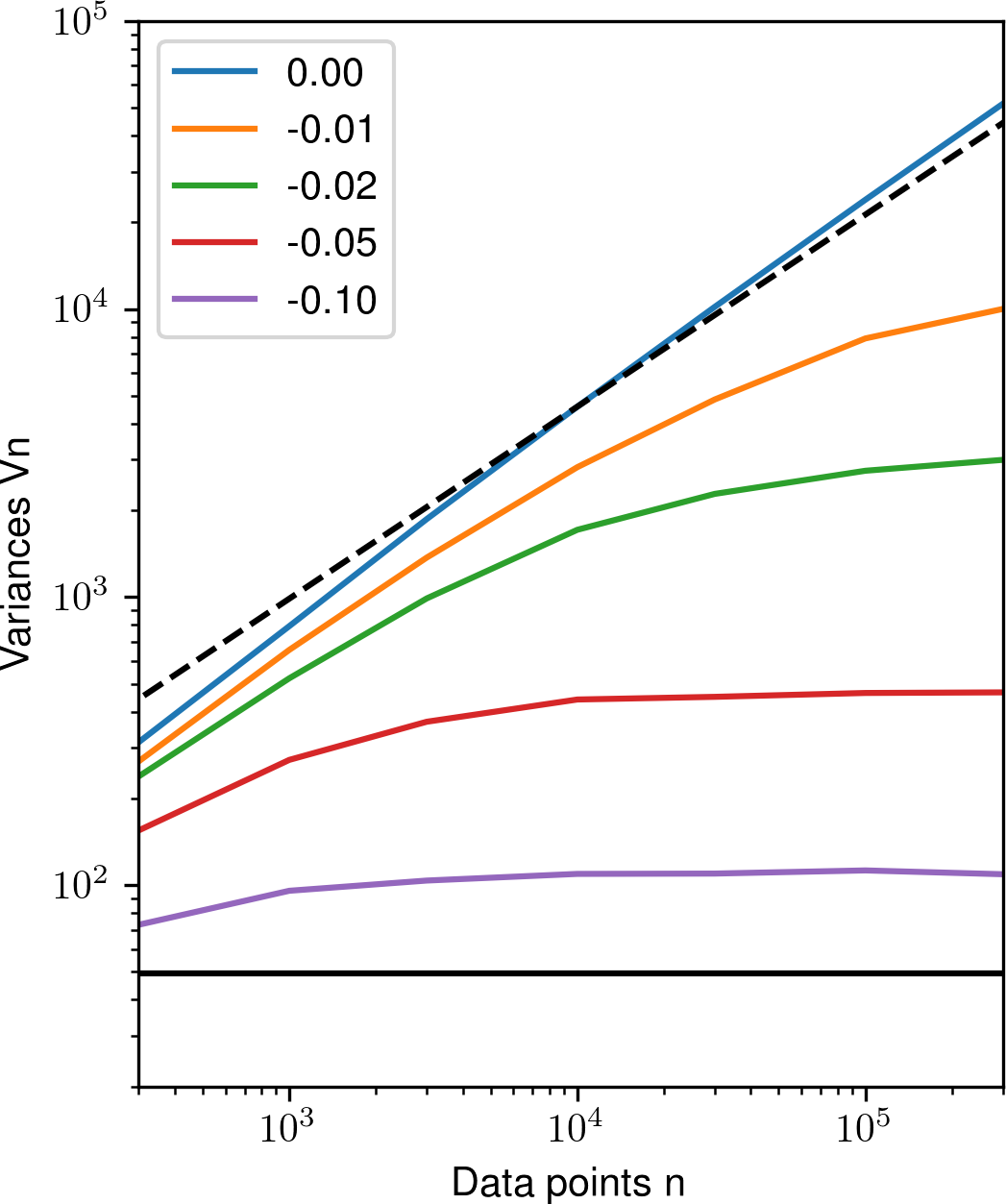}}
  \hspace*{0.02\textwidth}
  \subcaptionbox{$m=100$}[0.3\textwidth]{\includegraphics[height=0.3\textwidth]{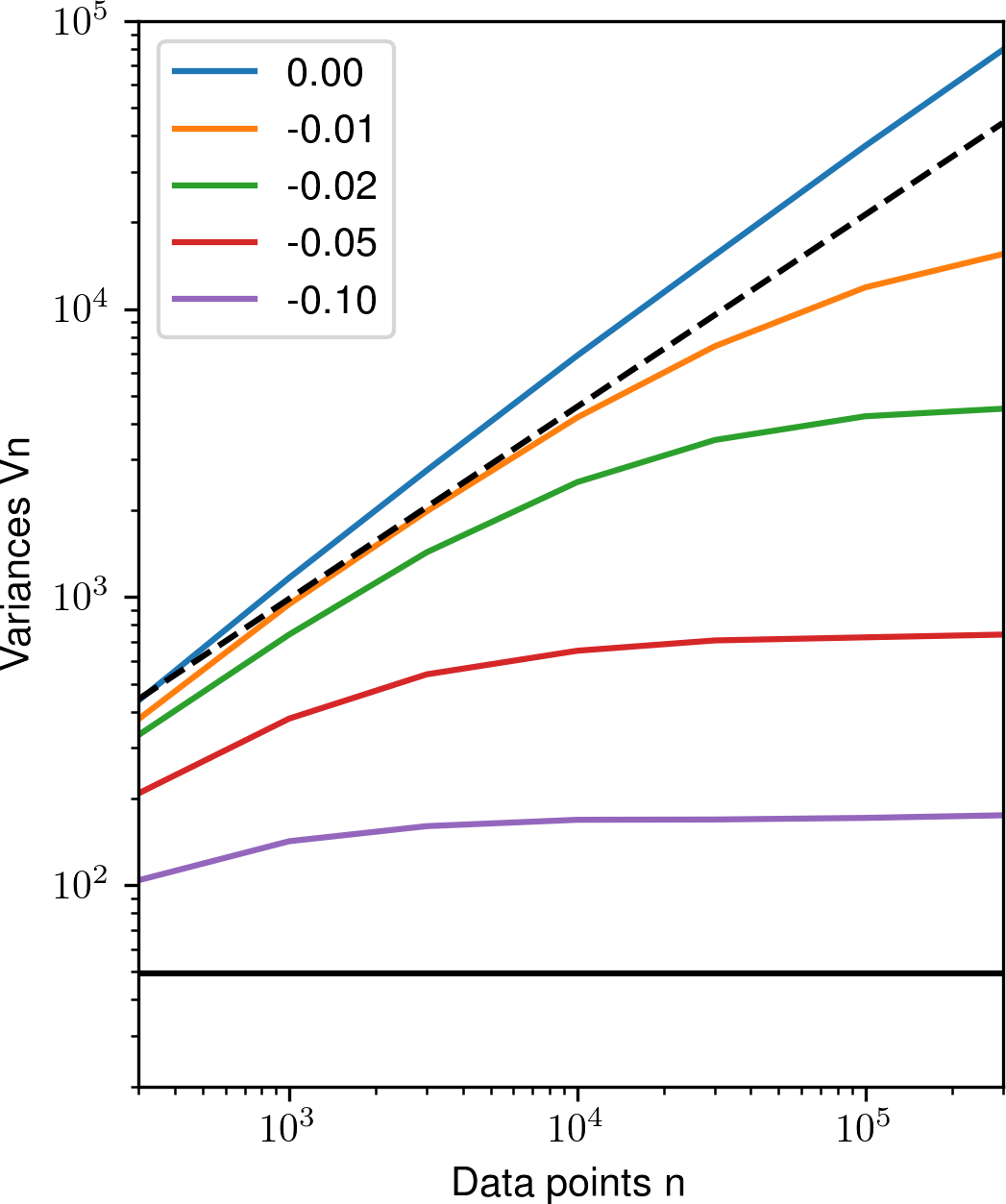}}
  \caption{\it Simulated variances $V$ times $n$ for different values of $\beta$ for dimensions $m=2$, $10$ and $100$. Black lines $V \propto n^{-1}$ (solid) and $V \propto n^{-\frac{1}{3}}$ (dashed) for reference.}
  \label{fig:sim_conv}
\end{figure}

The results of our simulation are displayed in Figure \ref{fig:sim_conv}. The asymptotic rates are clearly in agreement with our considerations based on the asymptotic theory. Strikingly, however, 
for $\beta < 0$ very close to $0$, the decay rate stays close to $n^{-\frac{1}{3}}$ until very large sample sizes and only then settles into the asymptotic rate of $n^{-1}$. This illustrates that the slow convergence to the mean is an issue, which does not only plague the distribution with $\beta = 0$ but also sufficiently adjacent distributions for finite sample size.

Even more strikingly, Figure  \ref{fig:sim_conv} shows that this phenomenon increases with dimension $m$. Indeed, due to Remark \ref{Rmk1} (ii), in the limit $m\to \infty$, all derivatives of $G$ vanish  with a uniform rate, so that we approach a situation of infinite smeariness.

\section*{Acknowledgment}

Both author gratefully acknowledge  DFG 755 B8, DFG HU 1575/4 and the Niedersachsen Vorab of the Volkswagen Foundation. We also thank Axel Munk for indicating to \cite{vanderVaar2000astat}. 

\bibliographystyle{Chicago}
\bibliography{superbib}

\end{document}